\numberwithin{equation}{section}
\newtheorem{thm}{Theorem}[section]
\newtheorem{lem}[thm]{Lemma}
\newtheorem{cor}[thm]{Corollary}
\newtheorem{prop}[thm]{Proposition}
\newtheorem{rem}[thm]{Remark}
\newtheorem{exam}[thm]{Example}
\newtheorem{defen}[thm]{Definition}
\newtheorem{defrem}[thm]{Definition and Remark}
\newcommand{\fa}{\mathfrak{a}}
\newcommand{\fb}{\mathfrak{b}}
\newcommand{\fc}{\mathfrak{c}}
\newcommand{\fd}{\mathfrak{d}}
\newcommand{\m}{\mathfrak{m}}
\newcommand{\p}{\mathfrak{p}}
\newcommand{\N}{\mathbb{N}}
\newcommand{\Z}{\mathbb{Z}}
\newcommand{\depth}{\operatorname{depth}}
\newcommand{\minAss}{\operatorname{minAss}}
\newcommand{\Ass}{\operatorname{Ass}}
\newcommand{\Supp}{\operatorname{Supp}}
\newcommand{\cd}{\operatorname{cd}}
\newcommand{\grade}{\operatorname{grade}}
\newcommand{\fgrade}{\operatorname{f-grade}}
\newcommand{\faR}{f_{\fa}^{R_+}}
\newcommand{\fR}{f_{R_+}}
\newcommand{\fg}{finitely generated}
\newcommand{\fG}{finitely graded}
\newcommand{\fGR}{finitely graded $R$-module}
\newcommand{\lM}{linked by $I$ over $M$}
\newcommand{\lR}{linked by $I$ over $R$}
\newcommand{\Ro}{$(R_0,\m_0)$}
\newcommand{\rs}{regular sequence}
\newcommand{\seq}{sequence}
\newcommand{\ab}{$\fa \stackrel{h} \sim_{(I;M)} \fb$}
\newcommand{\aR}{$\fa \stackrel{h} \sim_{(I;M)} R_+$}
\newcommand{\bM}{$\fb M=IM:_M \fa$}
\newcommand{\aM}{$\fa M=IM:_M \fb$}
\newcommand{\IM}{$\fa M \cap \fb M = IM$}
\newcommand{\homo}{homogeneous}
\newcommand{\st}{such that}
\newcommand{\MV}{Mayer-Vietoris}
\newcommand{\ifff}{if and only if}
\newcommand{\rCM}{relative Cohen-Macaulay}
\newcommand{\NAK}{Nakayama Lemma}
\begin{document}

\address{ }
\email{}

\author{Maryam Jahangiri$^*$}
\address{Department of Mathematics, Faculty of Mathematical Sciences and Computer, Kharazmi University, Tehran,
Iran.}
\email{jahangiri@khu.ac.ir, jahangiri.maryam@gmail.com }
 \thanks{$^*$Corresponding author}

\author{Azadeh Nadali}
\email{a.nadali1984@gmail.com}

\author{Khadijeh Sayyari}
\email{std\underline{ }sayyari@khu.ac.ir}

\subjclass[2010] {13D45, 13A02, 13C40}
 \keywords{Graded local cohomology modules, linkage of ideals, finiteness dimension, cohomological dimension, relative Cohen-Macaulay.  }

\title[  Graded local cohomology modules with respect to the linked ideals  ]
{ Graded local cohomology modules with respect to the linked ideals }

\begin{abstract}
Let $R=\oplus_{n\in \N_0}R_n$ be a standard graded ring, $M$ be a finitely generated graded
$R$-module and $R_+:=\oplus_{n\in \N}R_n$ denotes the irrelevant ideal of $R$.
In this paper, considering the new concept of linkage of ideals over a module, we study the graded components $H^i_{\fa}(M)_n$
when $\fa$ is an h-linked ideal over $M$. More precisely, we show that $H^i_{\fa}(M)$ is tame in each of the following cases:
\begin{itemize}
  \item [(i)] $i=\faR(M)$, the first integer $i$ for which $R_+\nsubseteq \sqrt{0:H^i_{\fa}(M)}$;
  \item [(ii)] $i=\cd(R_+,M)$, the last integer $i$ for which $H^{i}_{R_+}(M)\neq 0$, and $\fa=\fb+R_+$ where $\fb$ is an h-linked ideal with $R_+$ over $M$.
\end{itemize}
Also, among other things, we describe the components $H^i_{\fa}(M)_n$ where  $\fa$ is radically h-$M$-licci with respect to $R_+$ of length 2.

\end{abstract}

\maketitle
\section{introduction}

Throughout the paper, $R= \bigoplus_{n\in
\N_0}R_n$ is a standard graded Noetherian ring, i.e. $R_0$ is a
commutative Noetehrian ring and $R$ is generated, as an
$R_0$-algebra, by finitely many elements of degree one, $R_+=
\bigoplus_{n\in \N}R_n$ is the irrelevant ideal of $R$ and $\fa$ and $\fb$
are homogeneous  ideals of $R$. Also, $M$  denotes a
finitely generated graded $R$-module.

For $i\in \N_0$, the set of non-negative integers, and $n\in \Z$, the set of integers, let $H^{i}_{\fa}(M)_n$ denotes the
$n$-th component of graded local cohomology module $H^{i}_{\fa}(M)$
of $M$ with respect to $\fa$ (our terminology on local cohomology
comes from \cite{BSH12}). It is well-known that $H^{i}_{R_+}(M)_n$ is a
finitely generated $R_0$-module for all $n\in \Z$ and
$H^{i}_{R_+}(M)_n=0$ for all $n\gg 0$ (\cite[16.1.5]{BSH12}).
The asymptotic behavior of the components $H^i_{R_+}(M)_n$ when $n \rightarrow -\infty$ has been studied by many authors, too. See for example \cite{brodmann(survey)}, \cite{BH}, \cite{jzh} and \cite{thomasmarley}.
But, we know not much about the graded components $H^i_{\fa}(M)_n$ where $\fa$ is an arbitrary \homo ~ ideal of $R$.
Although, there are some studies in this topic, see for example \cite{lcm}, \cite{jz} and \cite{puthenpurakal2017graded}.

In a recent paper (\cite{linkage1}), the authors introduced the concept of linkage of ideals over a module, which is a generalization of its classical concept introduced by Peskine and Szpiro (\cite{peskine1974liaison}).

Let $\fc$ and $\fd$ be ideals of the commutative Noetherian ring $A$ with $1\neq 0$ and $N$ be a \fg ~ $A$-module. Assume that
 $\fc N\neq N \neq \fd N$ and $I \subseteq \fc \cap \fd$ is an ideal
 generated by an $N$-\rs. Then the ideals $\fc$ and $\fd$ are said to be linked by $I$ over $N$, denoted by
$\fc \sim_{(I;N)} \fd$, if $\fc N=IN:_N \fd$ and $\fd N=IN:_N \fc$. In \cite{cohomological3} and \cite{characterization2},
the authors studied some cohomological properties of linked ideals.

In this paper, we consider the above concept in the graded case. More precisely, the \homo ~ ideals $\fa$ and $\fb$ are said to be \homo ly linked (or h-linked) by $I$ over $M$, denoted by \ab,
if $I$ is generated by a \homo ~ $M$-\rs ~ and $\fa$ and $\fb$ are linked by $I$ over $M$.
We consider homogeneously linked ideals and  study some of their  cohomological properties.

This paper is divided into three sections.
In Section 2, we study some basic properties of homogeneously linked ideals.
We Show by examples that if \ab , this doesn't imply that $\fa \cap R_0 \sim_{(I \cap R_0;M_n)} \fb \cap R_0$
for all $n \in \Z$ and vice versa.
Although, in some cases it does (see \ref{exam2} and \ref{rem1}). Due to the importance of irrelevant ideal in a standard graded ring, it is natural to ask  whether a \homo ~ ideal could be h-linked with $R_+$.
We answer this question in some cases, see \ref{x link} and \ref{prop1}.

Section 3, which is the main part of the paper, is devoted to study the graded components of local cohomology modules $H^i_{\fa}(M)_n$ where $\fa$ is an h-linked ideal over $M$. Let $N=\bigoplus_{n \in \Z}N_n$ be a graded $R$-module, $end(N)$ is defined to be the last integer $n$ for which $N_n\neq 0$. As we mentioned above, $end(H^i_{R_+}(M))<\infty$ for all $ i \in \N_0$. Also, in \cite{jz}, it is shown that $end(H^i_{\fa}(M))<\infty$ for all $i \in \N_0$ and all \homo ~ideal $\fa \supseteq R_+$. In Theorem \ref{end2}, we show that if $\fa$ is an h-linked ideal with $R_+$ over $M$, then $end (H^i_{\fa}(M))<\infty$  for any $i\neq \grade(\fa , M)$ and that $end(H^{\grade(\fa , M)}_{\fa}(M))<\infty$ or $H^{\grade(\fa , M)}_{\fa}(M)_n\neq 0$ for all $n\gg 0$ where, $\grade(\fa,M)$ denotes the length of a maximal $M$-\rs ~ in $\fa$.

$\faR(M)$ is defined to be the first integer $i$ for which $R_+\nsubseteq \sqrt{0:H^i_{\fa}(M)}$.
This invariant was studied in \cite{thomasmarley} as well in \cite[\S 9]{BSH12}. Also, a graded $R$-module
$N=\bigoplus_{n \in \Z}N_n$ is said to be tame if $\{ n \in \Z | N_n=0 ~\text{ and}~ N_{n+1}\neq 0\}$ is a finite set.
Tameness of local cohomology modules is one of the most fundamental concepts concerning this modules and attracts lots of interests,
see for example \cite{brodmann(survey)}, \cite{BH}, \cite{lcm}, \cite{jzh} and \cite{rotthaus2005some}. In \cite[2.2]{jzh}, the authors show that $H^{\faR(M)}_{\fa}(M)$ is tame whenever $\fa \supseteq R_+$. In theorem \ref{tame}, we prove it without any restriction on $\fa$.

 In theorem \ref{faR2}, it is shown that if \Ro ~is local, then $f_{\fa +R_+}^{R_+}(M)$ is finite and that $H^{\cd(R_+,M)}_{\fa +R_+}(M)$ is tame in the case where $\fa$ is an h-linked ideal with respect to $R_+$ over $M$.
Here, $\cd(R_+,M)$ is the cohomological dimension of $M$ with respect to $R_+$, that is the last integer $i$ for which $H^{i}_{R_+}(M)\neq 0$.

We keep the notations introduced in the introduction, throughout the paper.


\section{Homogenously linked ideals over a module}

We start by the basic concept of the paper.

\begin{defen}
Assume that $\fa M\neq M \neq \fb M$ and $I\subseteq \fa \cap \fb$ be an ideal generated by a \homo ~ $M$-\rs. Then we say that the ideals $\fa$ and $\fb$ are \homo ly linked (or h-linked) by $I$ over $M$, denoted \ab, if \bM ~and \aM. The ideals $\fa$ and $\fb$ are said to be  geometrically h-\lM ~if, in addition, \IM. Also, we say that the ideal $\fa$ is h-linked over $M$ if there exist \homo ~ ideals $\fb$ and $I$ of $R$ such that \ab. $\fa$ is h-$M$-selflinked by $I$ if $\fa \stackrel{h}\sim_{(I;M)} \fa$.
\end{defen}

 \begin{rem}\label{geometrically or not}
  Note that, this definition is a special case of linkage of ideals over a module, studied in \cite{linkage1}. Moreover,
  if $\fa$ and $\fb$ are h-linked by $I$ over $M$ and $\grade(\fa,M)=t$ then the following statements hold.
  \begin{itemize}
    \item [(i)] If $\fa M \cap \fb M \neq IM$, then $IM:_M (\fa + \fb) \neq IM$. So, $(\fa +\fb) \subseteq Z(M/IM)$, the set of zero divisors of $M/IM$, that results
    $\grade(\fa + \fb , M)=t$.
    \item[(ii)] If \IM ~(i.e. $\fa$ and $\fb$ are geometrically h-linked), then, by \cite[2.9]{cohomological3},
     $\grade(\fa + \fb , M) = t+1$.
  \end{itemize}
\end{rem}

In the next example, we show that there is no bilateral relation between h-linkedness of ideals $\fa$ and $\fb$ by $I$ over $M$ with  linkage of $\fa \cap R_0$ and $\fb \cap R_0$ by $I \cap R_0$ over \homo ~ components of $M$.

\begin{exam}\label{exam2}
Let \Ro ~ be local with $\depth R_0>0$, $\m=\m_0+R_+$ be the \homo ~maximal ideal of $R$ and $x_1,x_2,\ldots, x_s$($s\geq 2$) be a \homo ~ $R$-\rs ~in $\m$. Assume that $1\leq l < s$ such that deg $x_i=0$ for all $1\leq i \leq l$ and deg $x_i\geq 1$ for all $l < i \leq s$.
\begin{itemize}

  \item Set $\fa := (x_1,x_2,\ldots, x_s)$, $I := (x_1,\ldots ,x_l,x^2_{l+1}, \ldots,x_s)$, $\fa_0:=\fa \cap R_0 =(x_1,\ldots, x_l)R_0 $ and $I_0:= I \cap R_0=(x_1,\ldots, x_l)R_0$. So, by \cite[2.2]{linkage1}, $\fa$ is h-$R$-selflinked by $I$. But, since $\fa_0 = I_0$, $\fa_0$ is not $R_n$-selflinked by $I_0$, for all n.

  \item Again, set $\fa := (x_1,x_2,\ldots, x_s)$, $I:= (x^2_1,x_2,\ldots, x_{s-1})$, $\fa_0:=\fa \cap R_0=(x_1,\ldots, x_l)R_0$ and $I_0:= I \cap R_0=(x_1^2,x_2, \ldots, x_l)R_0$. Then, $\grade(\fa,R) = s \neq \grade(I,R)$, so, by \cite[2.6(i)]{linkage1}, $\fa$ is not h-$R$-selflinked by $I$. But, $\fa_0$ is $R_n$-selflinked by $I_0$ for all $n$, using \cite[2.2]{linkage1} and the fact that $x_1^2,x_2,\ldots, x_l$ is an $R_n$-\rs ~for all $n$.
\end{itemize}
\end{exam}

\begin{rem}\label{rem1}
  Assume that \Ro ~ is local, $\fa$ and $\fb$ are generated by elements of degree zero. Then, despite the above example,
   $ \fa \stackrel{h}\sim_{(0;M)} \fb$  \ifff ~ $(\fa \cap R_0) \sim_{(0;M_n)} (\fb \cap R_0)$ for all $n \in \Z$.

\end{rem}

In view of the importance of the irrelevant ideal in a standard graded ring, it is natural to study \homo ~ ideals which are h-linked with $R_+$.

If $R=R_0[x_1 , \ldots , x_n]$ is a polynomial ring graded in the usual way, then $R_+=(x_1 , \ldots , x_n)$ is h-$R$-selflinked by $(x_1^2 ,x_2, \ldots , x_n)$, using \cite[2.2]{linkage1}. In the next example, we find some \homo ~ ideals that are h-linked with $R_+$.
It will be used in the next section, too.

\begin{exam}\label{x link}
\noindent
\begin{enumerate}

  \item Let $R=R_0[x]$ and $x\notin Z(M)$, then $R_+ \stackrel{h}\sim_{((r_0x^t);M)} (r_0x^{t-1})$ for all $r_0 \in R_0 \backslash Z(M)$ which is not unit and all $t\geq 1$.
  \item Let $R=R_0[x,y]$ and
   $r_1x, r_2y$ be an $M$-\rs ~where $r_1,r_2 \in R_0$.
  Then $R_+ \stackrel{h}\sim_{((r_1x^t,r_2y^{t'});M)} (r_1x^t,r_2y^{t'},r_1 r_2 x^{t-1} y^{t'-1})$
  for all $t,t'\geq 1$.
\end{enumerate}
\end{exam}
\begin{proof}
  The first case is obvious. To prove (2), we have to show that
  \begin{itemize}
    \item[(i)] $(x,y)M=(r_1x^t, r_2y^{t'})M:_M (r_1x^t, r_2y^{t'}, r_1r_2 x^{t-1}y^{t'-1})$,
    \item[(ii)] $(r_1x^t, r_2y^{t'}, r_1r_2 x^{t-1}y^{t'-1})M =(r_1x^t, r_2y^{t'})M:_M (x,y)$.
  \end{itemize}
   Since $r_1x,r_2y$ is an $M$-\rs , $x,y$ and $r_1x^i, r_2y^j$ are  $M$-\rs ~for all $i,j \geq 0$.  On the other hand, if $x_1,x_2,\ldots,x_n$ is an $M$-\rs ~ and $m_1,m_2, \ldots, m_n \in M$ such that $m_1x_1+\ldots +m_nx_n=0$, then $m_1 \in (x_2, \ldots x_n)M$.
  \begin{itemize}
    \item[(i)] Assume that $m \in M$ such that $m(r_1x^t, r_2y^{t'}, r_1r_2 x^{t-1}y^{t'-1}) \subseteq (r_1x^t, r_2y^{t'})M$.
    So, $mr_1r_2 x^{t-1}y^{t'-1} \in (r_1x^t, r_2y^{t'})M$, then there exist $m_1,m_2 \in M$ such that
    $(m r_2 y^{t'-1}-xm_1)r_1x^{t-1}-r_2y^{t'}m_2=0$. Thus $m r_2 y^{t'-1}-xm_1 \in r_2y^{t'}M$ and so there exists $m' \in M$ such that $(m-m'y)r_2y^{t'-1}\in Mx$. Hence, $m-m'y \in Mx$ and as a result $m \in (x,y)M$.
    \item [(ii)] Now, let $m \in M$ such that $m(x,y) \subseteq (r_1x^t, r_2y^{t'})M$. So, $mx \in (r_1x^t, r_2y^{t'})M$ and $my \in (r_1x^t, r_2y^{t'})M$. Then, there exist $m_1,m_2 \in M$ such that $(m-r_1x^{t-1}m_1)x-r_2y^{t'}m_2=0$. Thus $m-r_1x^{t-1}m_1 \in r_2y^{t'}M$. So, we have $m=r_1x^{t-1}m_1+r_2y^{t'}m''_1$ where $m''_1 \in M$.
        Same as above, $m=r_1x^{t}m''_2+r_2y^{t'-1}m'_2$ where $m'_2, m''_2 \in M$.
        Hence,
        \begin{equation}\label{equal}
           m=r_1x^{t-1}m_1+r_2y^{t'}m''_1=r_1x^{t}m''_2+r_2y^{t'-1}m'_2.  \qquad \tag{*}
        \end{equation}
       So, $m_1-xm''_2 \in M(r_2y^{t'-1})$ which implies that $m_1= xm''_2+nr_2y^{t'-1}$ for some $n \in M$. As a result, by \eqref{equal},
        $m \in (r_1x^t, r_2y^{t'}, r_1r_2 x^{t-1}y^{t'-1})M$.
  \end{itemize}
\end{proof}

It's natural to ask whether a \homo~ideal which is linked could be an h-linked ideal? In the following, we answer it in a special case.

\begin{rem}\label{rem2}

  Let  $R_0$ be reduced and $R_+$ be a linked ideal by $I$ over $R$.
  As $R_0$ is reduced, $R_+$ is radical.
    Also, in view of \cite[16.1.2]{BSH12}, there exists an ideal, say $I'$, generated by a \homo ~ $R$-\rs ~ of length $t$ in $R_+$, where $t:=\grade R_+$ and $I'\neq R_+$.
  So, using \cite[2.8]{linkage1}, \cite[Theorem 1]{hellus2001set} and \cite[1.4]{singh2007local},

  \begin{align*}
    \Ass R/R_+ &\subseteq  \Ass R/I \cap V(R_+) =\Ass Hom_R(R/R_+,R/I)  \\
     &= \Ass Ext^t_R (R/R_+,R)  =\Ass R/I' \cap V(R_+),
  \end{align*}

   where $V(R_+)$ denotes the set of prime ideals of $R$ containing $R_+$. This implies that $R_+$ is an h-linked ideal by $I'$, by \cite[2.5]{characterization2}.
\end{rem}

\begin{defen}
   Following \cite[2.1]{nagel1994cohomological}, a sequence $x_1,x_2,\ldots ,x_t$
   of homogeneous elements of $\fb$ is said to be a homogeneous $\fb$-filter regular sequence on $M$
    if $x_i \notin \p$ for all $\p \in \Ass(\frac{M}{(x_1,\ldots , x_{i-1})M}) \backslash V(\fb)$ and all
 $i = 1,\ldots,t$.
\end{defen}

Assume that $\fa$ is generated by elements of positive degrees and $\fa \subseteq \fb$. By \cite[1.5]{jz}, if
$\Supp(\frac{M}{\fa M})\nsubseteq V(\fb)$, then all maximal \homo ~ $\fb$-filter \rs s ~in $\fa$ on $M$ have the same finite length, that is denoted by $\fgrade(\fb,\fa,M)$. Also $\fgrade(\fb,\fa,M) := \infty$ whenever $\Supp(\frac{M}{\fa M})\subseteq V(\fb)$.
Note that $\grade(\fa ,M)\leq \fgrade(\fb,\fa,M)$.\\
Moreover, Chu and Gu in \cite[2.4]{lcm} in the case where $\fb=R_+$, show that if $\Supp(\frac{M}{\fa M})\nsubseteq V(R_+)$ then
\[
\fgrade(R_+,\fa,M)= \max \{ i \mid H^j_{\fa}(M)_n=0,~ \text{for all $n\gg 0$ and all $j <i$} \}.
\]

In the following proposition, we consider a polynomial ring and see whether a \homo ~ ideal could be h-linked with $R_+$.

\begin{prop}\label{prop1}
  Let \Ro ~be a regular local ring containing a field of characteristic zero and $R=R_0[x_1, \ldots ,x_t]$ be  the polynomial ring graded in the usual way, that is $\deg (x_i)=1$ for all $i=1,\ldots,t$. Then $R_+$ can't be h-linked with any ideal $\fa \supsetneq R_+$. Moreover, if $\fa \stackrel{h}\sim_{(I;R)} R_+$ and $\fa \nsubseteq R_+$, then $\fa$ and $R_+$ are geometrically h-linked by $I$ over $R$.
\end{prop}

\begin{proof}
  Let $\fa \stackrel{h}\sim_{(I;R)} R_+$ and suppose to the contrary that $R_+ \subsetneq \fa$. Since $t:=\grade R_+ \leqslant f-\grade(\fa,R_+,R)$, so $H_\fa ^t(R)_n$ is a \fg ~ $R_0$-module for all $n \in \Z$, by \cite[1.7]{jz}. Thus $H_\fa ^t(R) =0$, using \cite[8.1]{puthenpurakal2017graded}, that is a contradiction, in view of \cite[2.6(i)]{linkage1}.\\
  Now, assume that $\fa \stackrel{h} \sim_{(I;R)} R_+$ and $\fa \nsubseteq R_+$. By \cite[1.7]{jz} and \cite[8.1]{puthenpurakal2017graded}, $H^i_{\fa+R_+}(R)=0$ for all
   $i \leq f-\grade(\fa+R_+,R_+,R)$. So, $f-\grade(\fa+R_+,R_+,R) \lneq \grade(\fa + R_+)$. Thus $\grade R_+ \lneq \grade(\fa + R_+)$
  and, by \ref{geometrically or not}(i), $\fa \cap R_+ = I$.

\end{proof}

In the following proposition, we study the set $\Ass_{R_0} (M/\fa M)$ where $\fa$ is an h-linked ideal over $M$. It will be used later in the paper,too.

\begin{prop}\label{Ass}
  Assume that $\fa$ and $\fb$ are geometrically h-\lM ~ and $\fb \supseteq R_+$. Then
  \begin{itemize}

    \item [(i)] $\Ass_{R_0}(M/\fa M)=\Ass_{R_0}(M/IM) \bigcap V(\fa_0)$;
    \item [(ii)] $\Ass_{R_0}(M/\fa M)\bigcap \Ass_{R_0}(M/\fb M)=\varnothing$;
    \item [(iii)] $\Ass_{R_0}(M/\fb M) \bigcap V(\fa_0) = \varnothing $.
  \end{itemize}
  The first case also holds if $\fa$ and $\fb$ are just  h-linked and  $\Ass_{R}(M/IM)= \minAss_{R}(M/IM)$ (e.g. $M$ is a Cohen-Macaulay module).
\end{prop}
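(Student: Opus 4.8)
The plan is to reduce everything to two features of geometric h-linkage: the colon (duality) identities packaged in the definition, and the Mayer--Vietoris sequence that the geometric condition \IM{} provides. I would prove the items in the order (iii), (ii), (i), since (iii) drives the other two, and I would work directly with the graded components rather than passing to $R$-primes. First, for (iii), I record the colon identity $\fb M:_M\fa=\fb M$: if $m\in M$ satisfies $\fa m\subseteq\fb M$, then $\fa m\subseteq\fa M\cap\fb M=IM$ (this is where \IM{} enters), so $m\in IM:_M\fa=\fb M$. Equivalently $(0:_{M/\fb M}\fa)=0$. Now $R_+\subseteq\fb$ forces $M/\fb M$ to be annihilated by $R_+$, hence a finitely generated $R_0$-module (a quotient of $M/R_+M$) on which $\fa$ acts through $\fa_0=\fa\cap R_0$; thus $(0:_{M/\fb M}\fa_0)=(0:_{M/\fb M}\fa)=0$. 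By prime avoidance this says exactly that $\fa_0\nsubseteq\p$ for every $\p\in\Ass_{R_0}(M/\fb M)$, which is (iii).

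Statement (ii) is then immediate: since $\fa_0\subseteq\fa$ annihilates $M/\fa M$, every $\p\in\Ass_{R_0}(M/\fa M)$ contains $\fa_0$, i.e. $\Ass_{R_0}(M/\fa M)\subseteq V(\fa_0)$; combining with (iii) gives $\Ass_{R_0}(M/\fa M)\cap\Ass_{R_0}(M/\fb M)\subseteq V(\fa_0)\cap\Ass_{R_0}(M/\fb M)=\varnothing$.

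For (i) I would prove the two inclusions separately. For ``$\subseteq$'', the symmetric colon identity $\fa M:_M\fb=\fa M$ (same computation, using \IM{} again) gives $(0:_{M/\fa M}\fb)=0$, so by graded prime avoidance there is a homogeneous nonzerodivisor $y\in\fb$ on $M/\fa M$; multiplication by $y$ then embeds $M/\fa M$, up to a degree shift that does not affect $\Ass_{R_0}$, as a submodule of $\fb(M/\fa M)$. The geometric hypothesis yields the graded isomorphism $\fb(M/\fa M)=(\fa M+\fb M)/\fa M\cong\fb M/(\fa M\cap\fb M)=\fb M/IM$, and $\fb M/IM=(0:_{M/IM}\fa)$ is a submodule of $M/IM$ annihilated by $\fa_0$; chaining these inclusions of associated primes gives
$$\Ass_{R_0}(M/\fa M)\subseteq\Ass_{R_0}(\fb M/IM)\subseteq\Ass_{R_0}(M/IM)\cap V(\fa_0).$$
For ``$\supseteq$'', geometric linkage furnishes the exact sequence
$$0\to M/IM\to M/\fa M\oplus M/\fb M\to M/(\fa M+\fb M)\to0,$$
whose injectivity on the left is equivalent to \IM. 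Hence $\Ass_{R_0}(M/IM)\subseteq\Ass_{R_0}(M/\fa M)\cup\Ass_{R_0}(M/\fb M)$; if $\p\in\Ass_{R_0}(M/IM)\cap V(\fa_0)$ then $\p\notin\Ass_{R_0}(M/\fb M)$ by (iii), so $\p\in\Ass_{R_0}(M/\fa M)$. The two inclusions give (i).

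For the final assertion, where $\fa,\fb$ are only h-linked and $\Ass_R(M/IM)=\minAss_R(M/IM)$, both key tools above fail, since $IM\subsetneq\fa M\cap\fb M$ destroys the Mayer--Vietoris injection and the isomorphism $\fb(M/\fa M)\cong\fb M/IM$. The replacement I would use is the support identity $\Supp_R(M/\fa M)=\Supp_R(M/IM)\cap V(\fa)$, which holds unconditionally and already places the minimal primes of $M/IM$ that contain $\fa$ among the associated primes of $M/\fa M$, yielding ``$\supseteq$''; the always-valid duality $\fb M/IM\cong\operatorname{Hom}_R(R/\fa,M/IM)$ (which needs only $\fb M=IM:_M\fa$) then identifies $\Ass_R(\fb M/IM)=V(\fa)\cap\Ass_R(M/IM)$. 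I expect the genuine obstacle to be the reverse inclusion ``$\subseteq$'': a priori $M/\fa M$ could acquire an embedded associated prime $P\supseteq\fa$ that is not minimal over $M/IM$, and ruling this out is exactly what the hypothesis $\Ass_R(M/IM)=\minAss_R(M/IM)$ (no embedded primes; e.g. $M$ Cohen--Macaulay) is designed to do. This is the step I would develop most carefully, after which the support computation closes the $R$-level equality $\Ass_R(M/\fa M)=V(\fa)\cap\Ass_R(M/IM)$, and one descends to the graded components to recover the stated $R_0$-equality.
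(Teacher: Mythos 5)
Your treatment of parts (i)--(iii) in the geometric case is correct and takes a genuinely different, more self-contained route than the paper. The paper works at the level of $\Ass_R$: it quotes $\Ass_R(M/\fa M)=\{\p\in\Ass_R(M/IM)\cap V(\fa)\}$ from \cite[2.9]{linkage1}, contracts to $R_0$ via Matsumura's Exercise 6.7, and then uses $\sqrt{I+0:M}=\sqrt{\fa\cap\fb+0:M}$ together with $\fb\supseteq R_+$ to show that an associated prime of $M/IM$ whose contraction contains $\fa_0$ already contains $\fa$. You instead extract everything from the colon identities and from $\fa M\cap\fb M=IM$: the computation $(0:_{M/\fb M}\fa)=0$ gives (iii) directly (since $R_+\subseteq\fb$ makes $\fa$ act through $\fa_0$ on $M/\fb M$), (ii) follows, and for (i) the embedding $M/\fa M\hookrightarrow\fb(M/\fa M)\cong\fb M/IM=(0:_{M/IM}\fa)$ gives one inclusion while the injection $M/IM\hookrightarrow M/\fa M\oplus M/\fb M$ plus (iii) gives the other. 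This buys independence from the cited results and pleasantly sidesteps the contraction-of-primes argument. Two cosmetic points: the equivalence $(0:_N\fc)=0$ iff $\fc\nsubseteq\p$ for all $\p\in\Ass N$ is the standard characterization of associated primes rather than prime avoidance; and you do not need $y$ homogeneous, since any $y\in\fb$ outside the finitely many associated primes gives an injective $R_0$-linear multiplication map, so ordinary prime avoidance suffices.

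The final assertion, however, is left with a genuine gap, and your diagnosis of where the difficulty lies is off. You leave $\Ass_R(M/\fa M)\subseteq\Ass_R(M/IM)\cap V(\fa)$ as ``the step I would develop most carefully'' and suggest the hypothesis $\Ass_R(M/IM)=\minAss_R(M/IM)$ is what rules out embedded primes of $M/\fa M$. In fact that inclusion holds unconditionally for h-linked ideals: $\fa M=IM:_M\fb$ yields an embedding $M/\fa M\hookrightarrow\operatorname{Hom}_R(\fb,M/IM)$, whence $\Ass_R(M/\fa M)\subseteq\Supp(\fb)\cap\Ass_R(M/IM)$, and $\Ass_R(M/\fa M)\subseteq V(\fa)$ is trivial; this is exactly \cite[2.8]{linkage1}, used again in Lemma \ref{supp}. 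The no-embedded-primes hypothesis is needed for the \emph{opposite} inclusion, which your minimal-primes argument does supply. The real remaining gap is the descent from the $R$-level equality to the stated $R_0$-level one: given $\p\in\Ass_R(M/IM)$ with $\p\cap R_0\supseteq\fa_0$, you must show $\p$ (or some associated prime over the same contraction) contains all of $\fa$, not just $\fa_0$. This is precisely where the paper does its work, using $\fb\supseteq R_+$ and $\fa\cap R_+\subseteq\fa\cap\fb\subseteq\sqrt{I+0:M}\subseteq\p$; ``one descends to the graded components'' does not cover it, and without it the inclusion $\Ass_{R_0}(M/IM)\cap V(\fa_0)\subseteq\Ass_{R_0}(M/\fa M)$ remains unproved in the non-geometric case, where your Mayer--Vietoris route (which handled this in the geometric case via (iii)) is no longer available.
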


\begin{proof}
  \begin{itemize}

    \item [(i)] By \cite[2.9]{linkage1} and \cite[Exercise 6.7]{matsumura1989commutative}, $\Ass_{R_0}(M/\fa M)=\{\p \cap R_0~|~\p \in \Ass_{R}(M/IM)
        \\ \bigcap V(\fa)\}$ and that
     $\Ass_{R_0} (M/IM)=\{\p \cap R_0~|~ \p \in \Ass_{R}(M/IM)\}$.
     This implies that  $ \Ass_{R_0}(M/\fa M) \subseteq \Ass_{R_0}(M/IM) \bigcap V(\fa_0)$. \\
     Now, let $\p_0 \in \Ass_{R_0}(M/IM) \bigcap V(\fa_0)$. Then, there exists $\p \in \Ass_{R}(M/IM)$ such that $\p \cap R_0=\p_0$. $\sqrt{0:M + I} =\sqrt {0:M/IM}  \subseteq \p$. Thus, by \cite[2.2]{cohomological3} and the assumption,
      $\sqrt{0:M + \fa \cap R_+ } \subseteq \p$. So, $\p \supseteq \fa$ and, again by \cite[2.9]{linkage1}, $\p \in \Ass_{R}(M/\fa M)$.
    \item  [(ii)] Let $\p_0 \in \Ass_{R_0}(M/\fb M)$ then, by \cite[2.9]{linkage1}, there exists $\p \in \Ass_{R}(M/IM) \bigcap V(\fb)$
     such that $\p \cap R_0=\p_0$ and $\p \notin \Ass_{R}(M/\fa M)$. So, $\p  \notin V(\fa)$. On the other hand, $\p \supseteq \fb \supseteq R_+$ thus $\p_0 \nsupseteq \fa_0$ and by (i), $\p_0 \notin \Ass_{R_0}(M/\fa M)$.
    \item  [(iii)] Follows from (i) and (ii).
  \end{itemize}
\end{proof}

 If we remove the condition $\fb \supseteq R_+$, then the above proposition does not hold any more, as the following example shows.

\begin{exam}\label{exam1}
  Let $\fa$ and $R_+$ be geometrically h-\lR , then $\Ass_{R_0}(R/R_+)\neq \Ass_{R_0}(R/I)$.
\end{exam}

\begin{proof}
    Since $I=\fa \cap R_+$, so $\fa \nsupseteq R_+$.
    Assume that $\Ass_{R_0}(R/R_+)= \Ass_{R_0}(R/I)$, then by \ref{Ass}(i),
    $\Ass_{R_0}(R/\fa) \subseteq \Ass_{R_0}(R/R_+)$
    that is a contradiction, by \ref{Ass}(ii).
\end{proof}


\section{Graded components of $H^i_{\fa}(M)$ where $\fa$ is an h-linked ideal}

In this section, which is the main part of the paper, we study the graded components of $H^i_{\fa}(M)$ where $\fa$ is h-linked with the irrelevant ideal $R_+$ over $M$.

For a graded $R$-module $N=\bigoplus_{n \in \Z}N_n$, set
\[
end(N):=sup \{n \in \Z |~ N_n \neq 0  \}.
\]
Note that $end(N)$ could be $\infty$ and that the supremum of the empty set is to be taken as $-\infty$.

The following lemma, which consider a case where $end(H^i_{\fa}(M))<\infty$, will be used later in the paper, too.

\begin{lem}\label{end3}
   Let $t \in \N_0$ and assume that $end(H_{\fa}^i(M))<\infty$ for all $i\neq t$. Then
    for all $n\gg 0$ and all $i \in \N_0$,
    \[
    H_{\fa +\fb} ^i(M)_n\cong \left \{ \begin{array}{cc}
                                                           H_{\fb}^{i-t}(H_{\fa}^t(M))_n, & \quad i\geq t\\
                                                          0 & \quad i \lneq t.

\end{array} \right.
    \]
 \end{lem}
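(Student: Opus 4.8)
```latex
The plan is to use the composed-functor spectral sequence associated to the local cohomology bifunctor. Since $\fa + \fb$ is the ideal whose local cohomology we want to compute, I would invoke the graded version of the spectral sequence
\[
E_2^{p,q} = H_{\fb}^p(H_{\fa}^q(M)) \Longrightarrow H_{\fa + \fb}^{p+q}(M),
\]
which holds because $H_{\fa+\fb}^{\bullet}(-)$ is the derived functor of the composite $\Gamma_{\fb}\circ\Gamma_{\fa} = \Gamma_{\fa+\fb}$. This is a standard Grothendieck-type spectral sequence and it respects the grading, so we may read it off degree by degree: fixing $n \in \Z$, we get a spectral sequence of $R_0$-modules converging to $H_{\fa+\fb}^{p+q}(M)_n$.

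The key input is the hypothesis that $end(H_{\fa}^i(M)) < \infty$ for all $i \neq t$. First I would record what this buys us: for every $i \neq t$ there is an integer $N_i$ with $H_{\fa}^i(M)_m = 0$ for all $m > N_i$. I would then argue that the functor $H_{\fb}^p(-)$ applied to such a module, read in a fixed high degree $n$, eventually vanishes. Concretely, for $q \neq t$ the module $H_{\fa}^q(M)$ has $end < \infty$, and since $\fb$ is a homogeneous ideal the graded pieces $H_{\fb}^p(H_{\fa}^q(M))_n$ for $n$ large enough are controlled by the (bounded-above) graded pieces of $H_{\fa}^q(M)$; the crucial point is that computing $H_{\fb}^p$ via a complex built from localizations (or a Koszul/\v{C}ech complex on homogeneous generators of $\fb$) only mixes finitely many degrees in each step, so for $n$ sufficiently large only the contribution from $q = t$ survives. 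Thus for all $n \gg 0$ the $E_2$-page collapses onto the single row $q = t$: we have $E_2^{p,q}(n) = 0$ whenever $q \neq t$, leaving only $E_2^{p,t}(n) = H_{\fb}^{p}(H_{\fa}^t(M))_n$.

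Once the spectral sequence is concentrated in a single row $q = t$ (in the relevant degrees $n$), all differentials vanish automatically and the sequence degenerates at $E_2$, giving
\[
H_{\fa+\fb}^{i}(M)_n \cong E_2^{i-t,\,t}(n) = H_{\fb}^{\,i-t}(H_{\fa}^t(M))_n
\]
for $i \geq t$, while for $i < t$ the only possible contributing term would sit in row $q = t > i$, forcing $H_{\fa+\fb}^i(M)_n = 0$. This matches the two cases in the statement exactly. The edge of the argument is to be careful that the isomorphism is the one induced by the spectral-sequence edge maps and that it is graded, which follows since every functor involved is graded.

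The main obstacle I expect is making precise the claim that the bounded-above modules $H_{\fa}^q(M)$ ($q \neq t$) contribute nothing to $H_{\fb}^{\bullet}(-)_n$ in high degrees $n$. The subtlety is that $H_{\fa}^q(M)$ need not be finitely generated, so ``$end < \infty$'' is the right finiteness to exploit, but one must verify that applying $H_{\fb}^p$ (which is a higher derived functor, not left-exact-only) cannot create nonzero pieces in arbitrarily high degree $n$ out of a module supported in degrees $\leq N_q$. I would handle this by computing $H_{\fb}^p$ through the \v{C}ech complex on a homogeneous generating set $y_1,\dots,y_r$ of $\fb$: each localization $(H_{\fa}^q(M))_{y_{j_1}\cdots y_{j_k}}$ has its graded pieces in degree $n$ expressible through pieces of $H_{\fa}^q(M)$ in degrees bounded above by $N_q$ minus a nonnegative multiple of $\deg$'s, so for $n$ large every such piece is built entirely from zero summands, and hence the whole \v{C}ech complex in degree $n$ is zero for $q \neq t$. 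Pinning down the uniform bound on $n$ (one $N$ working for all $p$ simultaneously, since there are finitely many rows $q$ in the bounded range $0 \le q \le \cd(\fa,M)$) is the technical heart of the proof.
```
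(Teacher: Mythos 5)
Your proposal is correct and follows essentially the same route as the paper: the graded Grothendieck spectral sequence $E_2^{p,q}=H_{\fb}^{p}(H_{\fa}^{q}(M))\Rightarrow H_{\fa+\fb}^{p+q}(M)$, vanishing of all rows $q\neq t$ in degrees $n\gg 0$, and the resulting degeneration onto the row $q=t$. The only (harmless) divergence is in how that vanishing is justified: you give a direct \v{C}ech-complex degree count showing $end(H_{\fb}^{p}(N))\leq end(N)$ for any graded $N$, whereas the paper observes that $end(H_{\fa}^{q}(M))<\infty$ makes this module $R_+$-torsion, replaces $\fb$ by $\fb_0R$ with $\fb_0=\fb\cap R_0$, and invokes the fact that local cohomology with respect to an ideal generated in degree zero is computed componentwise --- both arguments are valid.
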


 \begin{proof}
   We have the following convergence of spectral \seq s, by  \cite[11.38]{rotman-spectral},
   \[
   (E_2 ^{i,j})_n=H_{\fb}^i(H_{\fa}^j(M))_n \stackrel{i} \Rightarrow H_{\fa + \fb}^{i+j}(M)_n.
   \]
   Since $end(H_{\fa}^j(M)) < \infty $ for all $j \neq t$, $H_{\fa}^j(M)$ is $R_+$-torsion for all $j \neq t$.
   So, by \cite[2.1.9]{BSH12},
   \[
   H_{\fb}^i(H_{\fa}^j(M))\cong H_{\fb +R_+}^i(H_{\fa}^j(M)) \cong H_{\fb_0R}^i(H_{\fa}^j(M)) \qquad \text{for all $j \neq t$
   and all $i\geq 0$},
   \]
   where $\fb_0:=\fb \cap R_0$. Hence, by \cite[14.1.12]{BSH12} and the assumption, $(E_2 ^{i,j})_n=H_{\fb_0}^i(H_{\fa}^j(M)_n)=0$ for all $j\neq t$ and all $n\gg 0$. As a result, $H_{\fa +\fb} ^{i}(M)_n\cong (E_2^{i-t,t})_n $ for all $i\geq t$ and that
    $H_{\fa +\fb} ^{i}(M)_n=0$ for all $i \lneq t$, when $n \gg 0$.
 \end{proof}

The following corollary, which is immediate by the above lemma, generalizes \cite[1.1]{jz}.
 \begin{cor}\label{end6}
 Let $end(H^i_{\fa}(M))<\infty$ for all $i \in \N_0$. Then for any  \homo ~ ideal $\fb \supseteq \fa$, $end(H_{\fb} ^i(M))<\infty$ for all $i \in \N_0$.
 \end{cor}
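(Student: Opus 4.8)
The plan is to deduce the statement directly from Lemma \ref{end3}, together with the torsion observation that already drives its proof. First I would record the trivial but essential reduction: since $\fb \supseteq \fa$ we have $\fa + \fb = \fb$, hence $H_{\fa+\fb}^i(M) = H_{\fb}^i(M)$, and it suffices to bound $end(H_{\fa+\fb}^i(M))$ for every $i$.

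Next, fix any $t \in \N_0$. The hypothesis that $end(H_{\fa}^i(M)) < \infty$ for all $i$ gives it in particular for all $i \neq t$, so Lemma \ref{end3} applies and yields, for $n \gg 0$, an isomorphism $H_{\fb}^i(M)_n \cong H_{\fb}^{i-t}(H_{\fa}^t(M))_n$ when $i \geq t$, and $H_{\fb}^i(M)_n = 0$ when $i < t$. The second alternative already settles the low range, so the remaining task is to show that the right-hand side $H_{\fb}^{i-t}(H_{\fa}^t(M))_n$ vanishes for $n \gg 0$.

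The step carrying the weight is the extra input that Lemma \ref{end3} leaves untreated at the exceptional level $t$: because $end(H_{\fa}^t(M)) < \infty$ is now also part of the hypothesis, the module $H_{\fa}^t(M)$ is itself $R_+$-torsion. Indeed, in a standard graded ring $R_+^k$ lives in degrees $\geq k$, so any homogeneous element of a module whose graded pieces vanish in high degree is annihilated by a power of $R_+$. I would then rerun on this single module exactly the base-change argument used inside the proof of Lemma \ref{end3}: by \cite[2.1.9]{BSH12}, $H_{\fb}^{i-t}(H_{\fa}^t(M)) \cong H_{\fb+R_+}^{i-t}(H_{\fa}^t(M)) \cong H_{\fb_0 R}^{i-t}(H_{\fa}^t(M))$ with $\fb_0 := \fb \cap R_0$, and by \cite[14.1.12]{BSH12} its $n$-th graded component is $H_{\fb_0}^{i-t}(H_{\fa}^t(M)_n)$. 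Since $end(H_{\fa}^t(M)) < \infty$ means $H_{\fa}^t(M)_n = 0$ for $n \gg 0$, this component vanishes for $n \gg 0$.

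Combining the two ranges gives $H_{\fb}^i(M)_n = 0$ for $n \gg 0$ and every $i \in \N_0$, that is $end(H_{\fb}^i(M)) < \infty$, as required. I do not anticipate a genuine obstacle here — this is why the corollary is billed as immediate — the only point to keep an eye on is that $H_{\fa}^t(M)$ need not be finitely generated, but the two tools invoked, \cite[2.1.9]{BSH12} and \cite[14.1.12]{BSH12}, hold for arbitrary graded modules and are applied to such non-finitely-generated modules already within Lemma \ref{end3}.
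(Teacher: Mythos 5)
Your argument is correct and is exactly the fleshing-out of what the paper leaves as ``immediate'' from Lemma \ref{end3}: with $\fa+\fb=\fb$, the lemma kills all rows $j\neq t$ of the spectral sequence for $n\gg 0$, and your rerun of the torsion/base-change step on the remaining row $j=t$ (legitimate, since $end(H^t_{\fa}(M))<\infty$ is now also assumed) finishes it. The only cosmetic shortcut you could have taken is to choose $t>\cd(\fa,M)$ so that $H^t_{\fa}(M)=0$ and the lemma's conclusion already reads $H^i_{\fb}(M)_n=0$ for all $i$ and $n\gg 0$.
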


The following lemma  will be used several times in the paper.

\begin{lem}\label{supp}
  Let $\fa$ be  linked by $I$ over $M$. Then $\Supp H^t_{\fa}(M)=\Supp M/\fa M$, where $t:=\grade(\fa , M)$.
\end{lem}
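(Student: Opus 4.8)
The plan is to prove the equality $\Supp H^t_{\fa}(M)=\Supp M/\fa M$ by establishing inclusion in both directions, where $t=\grade(\fa,M)$ and $\fa$ is linked by $I$ over $M$, say $\fa\sim_{(I;M)}\fb$. The key structural fact I would exploit is that since $I$ is generated by an $M$-regular sequence of length $t$ inside $\fa$, we have $\grade(I,M)=\grade(\fa,M)=t$, so $H^i_{\fa}(M)=0$ for $i<t$ and $H^t_{\fa}(M)$ is the first nonvanishing local cohomology. First I would localize: for a prime $\p$, passing to $M_\p$ over $R_\p$ commutes with local cohomology, i.e. $H^t_{\fa}(M)_\p\cong H^t_{\fa R_\p}(M_\p)$, so $\p\in\Supp H^t_{\fa}(M)$ iff $H^t_{\fa R_\p}(M_\p)\neq 0$. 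Thus everything reduces to tracking, prime by prime, when the $t$-th local cohomology survives.

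For the inclusion $\Supp H^t_{\fa}(M)\subseteq\Supp M/\fa M$, I would argue contrapositively: if $\p\notin\Supp M/\fa M$, then $\fa M_\p=M_\p$, so $\fa R_\p=R_\p$ (or at least $\fa M_\p=M_\p$ forces $H^i_{\fa R_\p}(M_\p)=0$ for all $i$), hence $H^t_{\fa}(M)_\p=0$. This direction should be routine. The reverse inclusion $\Supp M/\fa M\subseteq\Supp H^t_{\fa}(M)$ is the substantive one and is where the linkage hypothesis must enter, since for a general ideal the support of $M/\fa M$ can be strictly larger than the support of any single local cohomology module. Here I would use that, after localizing at $\p\in\Supp M/\fa M=V(\fa+(0:M))$, the element of the maximal depth situation forces $H^t_{\fa R_\p}(M_\p)\neq 0$. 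The natural tool is the relation, coming from the linkage defining equations $\fa M=IM:_M\fb$ and $\fb M=IM:_M\fa$, between $H^t_{\fa}(M)$ and the module $\operatorname{Hom}_R(R/\fa,M/IM)$ or $\Ext^t_R(R/\fa,M)$; indeed the linkage sequences identify $M/\fa M\cong (IM:_M\fb)/IM$ with a submodule of $M/IM$, which connects the support of $M/\fa M$ to $\Ass$ of $\Ext^t$, and the latter governs the support of $H^t_{\fa}(M)$ at its bottom degree.

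Concretely, I would use the standard fact that $\grade(\fa,M)=t$ gives $H^t_{\fa}(M)\cong\varinjlim\Ext^t_R(R/\fa^n,M)$ and that $\Ass H^t_{\fa}(M)\supseteq\Ass\Ext^t_R(R/\fa,M)$, together with the linkage identity which, via \cite[2.9]{linkage1}, relates $\Ass_R(M/\fa M)$ to $\Ass_R(M/IM)\cap V(\fa)$. Since $I$ is generated by an $M$-regular sequence, $\Ass_R(M/IM)=\Ass\Ext^t_R(R/I,M)$, and one checks that these associated primes, intersected with $V(\fa)$, are precisely associated primes of $H^t_{\fa}(M)$. Taking closures then yields $\Supp M/\fa M=V(\Ann_R M/\fa M)=\overline{\Ass_R(M/\fa M)}\subseteq\Supp H^t_{\fa}(M)$. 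The main obstacle I anticipate is making the second inclusion precise: one must show that \emph{every} prime in $\Supp M/\fa M$ (not merely the associated primes) lies in $\Supp H^t_{\fa}(M)$, which I would handle by verifying the claim on associated primes and then passing to supports via the fact that the support of a finitely generated module is the closure of its set of associated primes, while simultaneously confirming that $H^t_{\fa}(M)$ localizes correctly so that $\Supp H^t_{\fa}(M)$ is likewise closed and contains that closure.
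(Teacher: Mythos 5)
Your proposal is correct and follows essentially the same route as the paper: the easy inclusion $\Supp H^t_{\fa}(M)\subseteq \Supp M/\fa M$, then $\Ass(M/\fa M)\subseteq \Ass(M/IM)\cap V(\fa)$ from the linkage hypothesis, identification with $\Ass \operatorname{Ext}^t_R(R/\fa,M)$ via the regular sequence generating $I$, the Hellus/Singh--Walther theorem to pass to $\Ass H^t_{\fa}(M)$, and finally closure under specialization to upgrade the statement on associated primes to one on supports. The only differences are cosmetic (you route through $\operatorname{Ext}^t_R(R/I,M)$ where the paper uses $\operatorname{Hom}_R(R/\fa,M/IM)\cong \operatorname{Ext}^t_R(R/\fa,M)$), so no further comparison is needed.
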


\begin{proof}
  By \cite[2.8]{linkage1}, \cite[Theorem 1]{hellus2001set} and \cite[1.4]{singh2007local},
  \[
   \Ass M/\fa M \subseteq \Ass M/IM \cap V(\fa)=\Ass Hom_R(R/\fa , M/IM)= \Ass Ext^t_R(R/\fa , M)=\Ass H^t_{\fa}(M).
  \]
  On the other hand, $\Supp H^t_{\fa}(M) \subseteq \Supp M/\fa M$, which proves the claim.
\end{proof}

In the following, we show some equivalent conditions for $end(H^i_{\fa}(M))<\infty$, where $\fa$ is an h-linked ideal over $M$.

\begin{prop}\label{end4}
  Let $\fa$ be an h-linked ideal by $I$ over $M$ with $\grade(\fa , M)=t$. Then the following statements are equivalent.
  \begin{itemize}
    \item[(i)] $end(H^i_{\fa}(M))<\infty$ for all $i \in \N_0$,
    \item [(ii)] $end(H^t_{\fa}(M))<\infty$,
    \item [(iii)] $\Supp M/\fa M \subseteq V(R_+)$.
  \end{itemize}
  Also, if $\fa \stackrel{h}\sim_{(I;M)}\fb$ and one of the above conditions holds, then
   \begin{equation*}
H_{\fb}^i (M)_n \cong \left \{ \begin{array}{cc}
                                                           0 &  i\neq t\\
                                                           H_I^t(M)_n &  i=t,

\end{array} \right.
\end{equation*}

   for all $i$ and all $n\gg 0$.
\end{prop}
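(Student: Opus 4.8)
The plan is to prove the cycle (i) $\Rightarrow$ (ii) $\Rightarrow$ (iii) $\Rightarrow$ (i) and then establish the displayed formula separately. The implication (i) $\Rightarrow$ (ii) is immediate. For (ii) $\Rightarrow$ (iii) I would first record the elementary observation that over a standard graded ring a graded module $N$ with $end(N)<\infty$ is necessarily $R_+$-torsion: otherwise $\bar N:=N/\Gamma_{R_+}(N)$ is nonzero and $R_+$-torsion-free, and starting from any nonzero homogeneous element and repeatedly multiplying by a suitable element of $R_1$ (which never annihilates it, since $R$ is standard and $\bar N$ is torsion-free) produces nonzero components in all large degrees, forcing $end(\bar N)=\infty$ and hence $end(N)=\infty$. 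Applying this to $N=H^t_{\fa}(M)$ gives $\Supp H^t_{\fa}(M)\subseteq V(R_+)$, and since $t=\grade(\fa,M)$ and $\fa$ is linked by $I$ over $M$, Lemma \ref{supp} identifies $\Supp H^t_{\fa}(M)=\Supp M/\fa M$, which is (iii). For (iii) $\Rightarrow$ (i), the condition $\Supp M/\fa M\subseteq V(R_+)$ says $R_+\subseteq\sqrt{\fa+(0:_RM)}$, so $\fa$ and $\fa+R_+$ cut out the same subset of $\Supp M$ and therefore $H^i_{\fa}(M)\cong H^i_{\fa+R_+}(M)$ for all $i$; since $\fa+R_+\supseteq R_+$, the finiteness $end(H^i_{\fa+R_+}(M))<\infty$ of \cite{jz} transfers to $\fa$.

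For the displayed formula, set $W:=H^t_I(M)$. Since $I$ is generated by a homogeneous $M$-\rs ~of length $t=\grade(\fa,M)$, we have $H^i_I(M)=0$ for all $i\neq t$, so Lemma \ref{end3} applies with the role of ``$\fa$'' played by $I$ and the same $t$. Applying it with second ideal $\fd\in\{\fa,\fb,\fa+\fb\}$, and using $I+\fd=\fd$ in each case (as $I\subseteq\fa$), yields, for $n\gg0$,
\[
H^i_{\fd}(M)_n\cong H^{i-t}_{\fd}(W)_n\ (i\ge t),\qquad H^i_{\fd}(M)_n=0\ (i<t).
\]
I would then feed in two finiteness inputs. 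Assuming one of (i)--(iii) holds, condition (i) gives $H^i_{\fa}(M)_n=0$ for $n\gg0$, whence $H^j_{\fa}(W)_n=0$ for all $j\ge0$ and $n\gg0$; and since $\fa+\fb\supseteq\fa$, Corollary \ref{end6} gives $end(H^i_{\fa+\fb}(M))<\infty$, whence likewise $H^j_{\fa+\fb}(W)_n=0$ for all $j\ge0$ and $n\gg0$.

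It remains to compute $H^{\bullet}_{\fb}(W)$ in large degrees, and here the key input is that $W$ is $(\fa\cap\fb)$-torsion. Indeed $\Supp W\subseteq V(I)\cap\Supp M=\Supp M/IM$, and a localization argument shows $\Ass M/IM\subseteq V(\fa)\cup V(\fb)$: if some $\p\in\Ass M/IM$ contained neither $\fa$ nor $\fb$, then localizing $\fa M=IM:_M\fb$ at $\p$ would give $M_{\p}=I_{\p}M_{\p}$, contradicting Nakayama since $I\subseteq\p$. Hence $\Supp W\subseteq V(\fa)\cup V(\fb)=V(\fa\cap\fb)$, so $H^j_{\fa\cap\fb}(W)=0$ for $j\ge1$ and $H^0_{\fa\cap\fb}(W)=W$. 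Plugging these, together with the large-degree vanishing of $H^{\bullet}_{\fa}(W)$ and $H^{\bullet}_{\fa+\fb}(W)$, into the Mayer--Vietoris sequence
\[
\cdots\to H^j_{\fa+\fb}(W)\to H^j_{\fa}(W)\oplus H^j_{\fb}(W)\to H^j_{\fa\cap\fb}(W)\to H^{j+1}_{\fa+\fb}(W)\to\cdots
\]
collapses it in degrees $n\gg0$ to $H^0_{\fb}(W)_n\cong W_n$ and $H^j_{\fb}(W)_n=0$ for $j\ge1$. Combining with the case $\fd=\fb$ of the displayed isomorphism then gives $H^i_{\fb}(M)_n=0$ for $i\neq t$ and $H^t_{\fb}(M)_n\cong W_n=H^t_I(M)_n$ for $n\gg0$, as claimed.

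I expect the displayed formula, not the equivalences, to be the crux. The main obstacle is organizing the reduction to the single module $W=H^t_I(M)$: the point is that Lemma \ref{end3} applied to the complete-intersection-type ideal $I$ simultaneously converts the $\fa$-, $\fb$- and $(\fa+\fb)$-cohomology of $M$ into the corresponding cohomology of $W$, after which the torsion property $\Supp W\subseteq V(\fa\cap\fb)$ makes Mayer--Vietoris degenerate. A secondary point needing care is the uniformity of ``$n\gg0$'', which is harmless because only finitely many cohomological degrees are nonzero, so the finitely many thresholds can be taken simultaneously.
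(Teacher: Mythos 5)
Your proof is correct, and the equivalence of (i)--(iii) is argued exactly as in the paper: (ii)$\Rightarrow$(iii) via ``$end<\infty$ implies $R_+$-torsion'' plus Lemma \ref{supp}, and (iii)$\Rightarrow$(i) via the independence theorem and the known finiteness for ideals containing $R_+$ (your torsion-freeness argument for the first step is a contrapositive of the one-line observation that every homogeneous element of a module vanishing in large degrees is killed by a power of $R_+$, but it is sound). For the displayed formula you take a genuinely longer route than the paper. The paper simply writes the homogeneous \MV{} sequence for the pair $(\fa,\fb)$ applied to $M$ itself, with $H^i_{\fa\cap\fb}(M)$ replaced by $H^i_I(M)$ via \cite[2.2(i)]{cohomological3}, and kills the terms $H^i_{\fa}(M)_n$ and $H^i_{\fa+\fb}(M)_n$ for $n\gg 0$ using (i) and Corollary \ref{end6}; the sequence then collapses to $H^i_{\fb}(M)_n\cong H^i_I(M)_n$ in one step. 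You instead first push everything down to $W=H^t_I(M)$ via Lemma \ref{end3} and only then run \MV{} on $W$, using that $W$ is $(\fa\cap\fb)$-torsion. That detour is logically valid but unnecessary: the same two vanishing inputs you feed into the sequence for $W$ already degenerate the sequence for $M$ directly, and your localization--Nakayama argument showing $\Ass M/IM\subseteq V(\fa)\cup V(\fb)$ is in substance a self-contained reproof of the cited fact that $\fa\cap\fb$ and $I$ have the same radical up to $0:M$. What your version buys is independence from the external reference \cite{cohomological3} and an explicit description of the $\fb$-cohomology of the single module $H^t_I(M)$; what it costs is the extra spectral-sequence layer and the bookkeeping of uniform thresholds $n\gg 0$, which the direct argument avoids.
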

\begin{proof}
   "$(ii) \Rightarrow (iii)$" Since $end(H^t_{\fa}(M))< \infty$, $ H^t_{\fa}(M)$ is $R_+$-torsion and $\Ass H^t_{\fa}(M)\subseteq V(R_+)$. So, the result follows from \ref{supp}.\\
   "$(iii)\Rightarrow (i)$" Since $\sqrt{\fa+ 0:M}\supseteq \sqrt{R_+}$, using \cite[2.1.9]{BSH12}, \cite[16.1.5(ii)]{BSH12} and \ref{end6}, the statement holds.

  The last statement follows from \cite[2.2(i)]{cohomological3}, \ref{end6} and the following \homo ~ \MV~ sequence
  \[
  \ldots \longrightarrow H^i_{\fa +\fb}(M)\longrightarrow H^i_{\fa}(M)\oplus H^i_{\fb}(M)\longrightarrow H^i_I(M) \longrightarrow
  H^{i+1}_{\fa +\fb}(M) \longrightarrow \ldots ~.
  \]
\end{proof}

Note that if $\fb \subseteq R_+$ and one of the above conditions holds, then
 $\fa$ can't be geometrically h-linked with $\fb$.
 Otherwise, by \ref{end4}(iii), $H^i_{\fa}(M)\cong H^i_{\fa +R_+}(M)$ for all $i$, so $\grade(\fa , M)=\grade(\fa +\fb,M)$, that is a contradiction by \ref{geometrically or not}.

\begin{defen}
\noindent
\begin{itemize}
  \item We say that the ideal $I$ is generated by an $M$-\rs ~ under radical if there exists an $M$-\rs ~ $\underline{x}=x_1, \ldots , x_t$ \st~ $\sqrt{I+0:M}=\sqrt{\underline{x}+0:M}$.
  \item $\fa$ and $\fb$ are said to be in an h-$M$-linkage class of length $n$ if there exist $n \in \N$ and \homo ~ ideals $\fa_1, \ldots, \fa_n$ and $I_1, \ldots, I_n$ \st ~ $\fa =\fa_0\stackrel{h}\sim_{(I_1;M)} \fa_1 \stackrel{h}\sim \ldots \stackrel{h}\sim_{(I_n;M)} \fa_n =\fb$. If, in addition, $\fb$ is generated by a \homo ~ $M$-\rs ~ under radical, then $\fa$ is called radically h-$M$-licci with $\fb$ of length  $n$.
\end{itemize}

\end{defen}

\begin{cor}\label{end5}
  If $\fa \stackrel{h}\sim_{(I;M)}\fb$ and $t:=\grade(\fa,M)$, then the following statements are equivalent.
  \begin{itemize}
    \item [(i)] $\max \{ end(H^t_\fa(M)),end(H^t_\fb(M))\}<\infty$,
    \item [(ii)] $\Supp M/IM \subseteq V(R_+)$.
  \end{itemize}
  In particular, if $\fb=R_+$ and $end(H^t_\fa(M)) <\infty$, then $\fa$ is radically h-$M$-licci with $R_+$ of length 1.
\end{cor}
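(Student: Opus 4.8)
The plan is to translate both sides of the equivalence into statements about the supports of $M/IM$, $M/\fa M$ and $M/\fb M$ by applying Proposition \ref{end4} separately to the two linked ideals, and then to glue the two resulting support conditions together through a support identity forced by the linkage. First, since $\fa \stackrel{h}\sim_{(I;M)}\fb$ and the defining relations $\fa M=IM:_M\fb$ and $\fb M=IM:_M\fa$ are symmetric in $\fa$ and $\fb$, the ideal $\fb$ is also h-linked by $I$ over $M$, and by \cite[2.6(i)]{linkage1} one has $\grade(\fb,M)=\grade(I,M)=\grade(\fa,M)=t$. Hence the equivalence $(ii)\Leftrightarrow(iii)$ of Proposition \ref{end4}, applied once to $\fa$ and once to $\fb$, yields
\[
end(H^t_{\fa}(M))<\infty \Longleftrightarrow \Supp M/\fa M\subseteq V(R_+),\qquad end(H^t_{\fb}(M))<\infty \Longleftrightarrow \Supp M/\fb M\subseteq V(R_+),
\]
so that condition (i) is equivalent to $\Supp M/\fa M\cup\Supp M/\fb M\subseteq V(R_+)$.

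The heart of the argument is the identity $\Supp M/IM=\Supp M/\fa M\cup\Supp M/\fb M$. The inclusion $\supseteq$ is clear, since $IM\subseteq\fa M$ and $IM\subseteq\fb M$ give surjections $M/IM\twoheadrightarrow M/\fa M$ and $M/IM\twoheadrightarrow M/\fb M$. For $\subseteq$ I would first prove $\Supp M/IM\subseteq V(\fa)\cup V(\fb)$ by localizing the linkage relation: if $\p\in\Spec R$ satisfies $\fa\nsubseteq\p$ and $\fb\nsubseteq\p$, then $\fa_{\p}=\fb_{\p}=R_{\p}$, and since the colon commutes with localization for the finitely generated ideal $\fb$, localizing $\fa M=IM:_M\fb$ at $\p$ gives $M_{\p}=\fa M_{\p}=IM_{\p}:_{M_{\p}}\fb_{\p}=IM_{\p}$, whence $(M/IM)_{\p}=0$ and $\p\notin\Supp M/IM$. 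Intersecting with $V(\fa)$ and using $\Supp M/IM\cap V(\fa)=V(\fa+0:M)=\Supp M/\fa M$ (and symmetrically for $\fb$) then produces the reverse inclusion. Combining this identity with the previous paragraph shows that (i) holds if and only if $\Supp M/IM\subseteq V(R_+)$, which is exactly (ii); note that for the easy direction (ii)$\Rightarrow$(i) only the trivial inclusions $\Supp M/\fa M,\ \Supp M/\fb M\subseteq\Supp M/IM$ are needed.

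For the concluding assertion, set $\fb=R_+$. Because $H^i_{R_+}(M)_n=0$ for all $n\gg0$ by \cite[16.1.5]{BSH12}, we always have $end(H^t_{R_+}(M))<\infty$; thus the hypothesis $end(H^t_{\fa}(M))<\infty$ already forces condition (i), and the equivalence just established gives (ii), i.e. $\Supp M/IM\subseteq V(R_+)$, equivalently $R_+\subseteq\sqrt{I+0:M}$. Writing $I=(\underline{x})$ for the homogeneous $M$-regular sequence $\underline{x}$ supplied by the h-linkage, the inclusion $I\subseteq R_+$ gives $\sqrt{\underline{x}+0:M}=\sqrt{I+0:M}\subseteq\sqrt{R_++0:M}$, while $R_+\subseteq\sqrt{I+0:M}$ gives the opposite inclusion; hence $\sqrt{R_++0:M}=\sqrt{\underline{x}+0:M}$, so $R_+$ is generated by a homogeneous $M$-regular sequence under radical. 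Together with the length-one h-linkage $\fa\stackrel{h}\sim_{(I;M)}R_+$, this is precisely the statement that $\fa$ is radically h-$M$-licci with $R_+$ of length $1$.

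I expect the support identity to be the main obstacle, and within it the delicate point is the localization of the colon $IM:_M\fb$ together with the extraction of the vanishing $M_{\p}=IM_{\p}$; once this is in place, the remaining steps are routine applications of Proposition \ref{end4} and an unwinding of the definitions of ``under radical'' and ``radically h-$M$-licci''.
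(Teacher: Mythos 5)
Your proof is correct and follows essentially the same route as the paper: the paper's one-line argument combines Proposition \ref{end4} (applied to each of the two linked ideals), the vanishing $end(H^i_{R_+}(M))<\infty$, and the citation \cite[2.6(iii)]{linkage1}, which is precisely the support identity $\Supp M/IM=\Supp M/\fa M\cup \Supp M/\fb M$ that you prove by hand via localization of the colon relation. Your in-line verification of that identity (and of $\grade(\fb,M)=\grade(I,M)=t$ via \cite[2.6(i)]{linkage1}) is sound, so the only difference is that you re-derive the cited lemma rather than quoting it.
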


\begin{proof}
  The results follow from \cite[2.6(iii)]{linkage1}, \ref{end4} and the fact that $end(H^i_{R_+}(M))<\infty$ for all $i \in \N_0$.
\end{proof}

  The above corollary shows, if $\fa \stackrel{h}\sim_{(I;M)}\fb$, then $end(H^i_\fa(M))$ or $end(H^i_\fb(M))$ is infinite for some $i \in \N_0$ \ifff~ $\Supp M/IM \nsubseteq V(R_+)$.

\begin{prop}\label{end7}
  Assume that $\fa \stackrel{h}\sim _{(I;M)} \fb \stackrel{h}\sim_{(J;M)} \fc$
     and $end (H^t_I(M)) <\infty$, where $t:=\grade(I,M)$. Then
     \begin{equation*}
H_{\fc}^i (M)_n =\left \{ \begin{array}{cc}
                                                           0 &  i\neq t\\
                                                           H_J^t(M)_n &  i=t,

\end{array} \right.
\end{equation*}
 for all $i$ and all $n\gg 0.$
   \end{prop}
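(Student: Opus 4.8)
The plan is to reduce the two-fold linkage to a single application of the last statement of Proposition \ref{end4} for the second link $\fb \stackrel{h}\sim_{(J;M)} \fc$; for this I must first verify that $end(H^t_{\fb}(M))<\infty$.

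First I would record the relevant grades. Since $\fa \stackrel{h}\sim_{(I;M)} \fb$ and $\fb \stackrel{h}\sim_{(J;M)} \fc$, \cite[2.6(iii)]{linkage1} gives $\grade(\fa,M)=\grade(\fb,M)=\grade(I,M)=t$ and likewise $\grade(\fb,M)=\grade(\fc,M)=\grade(J,M)$; hence all of these equal $t$. Moreover, by the definition of h-linkage, $I$ is generated by a \homo ~$M$-\rs ~of length $t$, so $t=\grade(I,M)=\ara(I)$ forces $\cd(I,M)=t$ and therefore $H^i_I(M)=0$ for every $i\neq t$. Consequently the hypothesis $end(H^t_I(M))<\infty$ in fact says $end(H^i_I(M))<\infty$ for all $i$.

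Next I would identify the support of $H^t_I(M)$. For $\p\in\Supp(M/IM)$ one has $I_\p M_\p\neq M_\p$, so $\grade(I_\p,M_\p)<\infty$ and the bottom local cohomology $H^{\grade(I_\p,M_\p)}_{I_\p}(M_\p)\neq 0$; since the only non-vanishing $H^i_I(M)$ sits in degree $t$, this forces $(H^t_I(M))_\p\neq 0$. With the trivial inclusion $\Supp H^t_I(M)\subseteq\Supp(M/IM)$ this yields $\Supp H^t_I(M)=\Supp(M/IM)$. Now $end(H^t_I(M))<\infty$ makes $H^t_I(M)$ an $R_+$-torsion module: a homogeneous element that were not $R_+$-torsion would satisfy $R_+^k m\neq 0$ for all $k$ and hence produce non-zero components in arbitrarily high degrees, contradicting finiteness of the end (this is the same observation used in the proof of \ref{end4}). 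Therefore $\Supp(M/IM)=\Supp H^t_I(M)\subseteq V(R_+)$.

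At this point Corollary \ref{end5}, applied to the link $\fa \stackrel{h}\sim_{(I;M)} \fb$, gives $\max\{end(H^t_{\fa}(M)),end(H^t_{\fb}(M))\}<\infty$; in particular $end(H^t_{\fb}(M))<\infty$, which is condition (ii) of Proposition \ref{end4} for the h-linked ideal $\fb$ (recall $\grade(\fb,M)=t$). Hence all the equivalent conditions of \ref{end4} hold for $\fb$, and the last statement of \ref{end4} applied to the second link $\fb \stackrel{h}\sim_{(J;M)} \fc$ gives exactly
\[
H^i_{\fc}(M)_n \cong \left\{ \begin{array}{cc} 0 & i\neq t \\ H^t_J(M)_n & i=t \end{array}\right.
\]
for all $i$ and all $n\gg 0$, which is the assertion. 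The only genuinely new work is the passage from the numerical hypothesis $end(H^t_I(M))<\infty$ to the support statement $\Supp(M/IM)\subseteq V(R_+)$; once that identification of $\Supp H^t_I(M)$ is in hand, the rest is simply threading Corollary \ref{end5} and Proposition \ref{end4} along the linkage chain, so I expect the support computation for $H^t_I(M)$ to be the main obstacle.
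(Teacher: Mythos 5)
Your proof is correct, and its skeleton is the same as the paper's (show that the middle ideal $\fb$ has $end(H^i_{\fb}(M))<\infty$, then push the information across the second link, which ultimately rests on the Mayer--Vietoris sequence for $\fb,\fc$ with $H^i_{\fb\cap\fc}\cong H^i_J$), but you reach that point by a noticeably different path. The paper passes directly from ``$H^t_I(M)$ is $R_+$-torsion'' to $\Supp(M/\fb M)\subseteq V(R_+)$ via the associated-prime identities of Hellus and Singh--Walther together with \cite[2.6]{linkage1}, and then writes out the Mayer--Vietoris argument for $\fb$ and $\fc$ explicitly. You instead prove $\Supp H^t_I(M)=\Supp(M/IM)$ by a self-contained localization argument (flat base change plus Grothendieck vanishing/non-vanishing at the grade), conclude $\Supp(M/IM)\subseteq V(R_+)$, and then thread the already-established Corollary \ref{end5} and the last statement of Proposition \ref{end4} along the chain; this avoids re-deriving the Mayer--Vietoris computation and replaces the citation chain for associated primes with an elementary support computation. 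Both routes are valid; yours is more economical in that it reuses \ref{end4} and \ref{end5} as black boxes, while the paper's is more direct in identifying exactly which associated primes are involved. The only point worth making explicit in your write-up is the flat base change isomorphism $H^i_I(M)_\p\cong H^i_{I_\p}(M_\p)$ that underlies your localization step, and the reference \cite[6.2.7]{BSH12} for the non-vanishing of $H^{\grade(I_\p,M_\p)}_{I_\p}(M_\p)$.
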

   
\begin{proof}
  
    Since $end (H^t_I(M)) <\infty$, $H^t_I(M)$ is $R_+$-torsion. So, by \cite[Theorem 1]{hellus2001set}, \cite[1.4]{singh2007local} and \cite[2.6]{linkage1}, $\Supp (M/\fb M) \subseteq V(R_+)$, in other words $R_+ \subseteq \sqrt{\fb + 0:M}$. Thus $H^i_{\fb}(M)_n \cong H^i_{\fb+R_+}(M)_n=0$ for all $i$ and all $n\gg 0$, by \ref{end6}.  Also, in view of \cite[2.2]{cohomological3}, we have the following \homo ~ \MV ~ \seq
     \[
      \ldots \longrightarrow H^i_{\fb +\fc}(M)\longrightarrow H^i_{\fb}(M)\oplus H^i_{\fc}(M)\longrightarrow H^i_J(M) \longrightarrow H^{i+1}_{\fb +\fc}(M) \longrightarrow \ldots ~.
  \]
  This, in conjunction with \ref{end6} and \cite[2.6]{linkage1}, follows the result.

\end{proof}

Using \ref{end7}, we can describe the components $H^i_{\fa}(M)_n$ where $\fa$ is radically h-$M$-licci with $R_+$ of length 2, as follows:

\begin{cor}
  If $\fa$ is radically h-$M$-licci with $R_+$ of length 2, i.e. $\fa \stackrel{h}\sim_{(I;M)} \fb \stackrel{h}\sim_{(J;M)} R_+$ and $\sqrt{R_+ + 0:M}=\sqrt{J+0:M}$, then
   \begin{equation*}
H_{\fa}^i (M)_n =\left \{ \begin{array}{cc}
                                                           0 &  i\neq t\\
                                                           H_I^t(M)_n &  i=t,

\end{array} \right.
\end{equation*}

 for all $i$ and all $n\gg 0$ where $t:=\grade(I,M)$.
\end{cor}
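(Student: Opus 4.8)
The plan is to deduce the statement directly from Proposition \ref{end7} by exploiting the symmetry of the linkage relation. Since the condition that $\fa$ and $\fb$ are h-linked by $I$ over $M$ consists of the symmetric pair of equalities \bM{} and \aM, the relation $\fa \stackrel{h}\sim_{(I;M)} \fb$ coincides with $\fb \stackrel{h}\sim_{(I;M)} \fa$. Consequently the hypothesized chain $\fa \stackrel{h}\sim_{(I;M)} \fb \stackrel{h}\sim_{(J;M)} R_+$ may be read backwards as $R_+ \stackrel{h}\sim_{(J;M)} \fb \stackrel{h}\sim_{(I;M)} \fa$, which is precisely the shape of chain to which Proposition \ref{end7} applies: now $R_+$ plays the role of the initial ideal, $J$ is the first linking ideal, $\fb$ the middle ideal, and $\fa$ the terminal ideal $\fc$.

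To invoke Proposition \ref{end7} for this reversed chain I must verify its single hypothesis, namely $end(H^{s}_J(M))<\infty$, where $s:=\grade(J,M)$ is the grade on $M$ of the first linking ideal. First I would record the grade bookkeeping: in any h-linkage the grade on $M$ of the two linked ideals agrees with that of the linking ideal (\cite[2.6]{linkage1}), so carrying this through both links of the chain gives $\grade(I,M)=\grade(\fb,M)=\grade(J,M)$; in particular $s=\grade(J,M)=\grade(I,M)=t$, so that the grade appearing in Proposition \ref{end7} coincides with the $t$ in the statement. Next, the radical hypothesis $\sqrt{R_+ + 0:M}=\sqrt{J+0:M}$ forces $H^i_J(M)\cong H^i_{R_+}(M)$ for every $i$, because $H^i_{(-)}(M)$ depends only on the radical of the defining ideal together with $0:M$. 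Since $end(H^i_{R_+}(M))<\infty$ for all $i$ by \cite[16.1.5]{BSH12}, we obtain $end(H^t_J(M))<\infty$, and the hypothesis of Proposition \ref{end7} is met.

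With these checks in place, applying Proposition \ref{end7} to the reversed chain yields, for all $n\gg 0$, that $H^i_{\fa}(M)_n=0$ when $i\neq t$ and $H^i_{\fa}(M)_n\cong H^t_I(M)_n$ when $i=t$ (the terminal linking ideal of the reversed chain being $I$), which is exactly the asserted description. I expect no serious obstacle: essentially all of the substance is already contained in Proposition \ref{end7}, and the only points requiring care are the symmetry of the linkage relation that permits reversing the chain, the equality $\grade(I,M)=\grade(J,M)$ that makes the two occurrences of $t$ agree, and the translation of the radical hypothesis into the finiteness $end(H^t_J(M))<\infty$ via the comparison with $H^t_{R_+}(M)$.
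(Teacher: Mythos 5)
Your proposal is correct and follows essentially the same route as the paper: the paper also deduces the corollary from Proposition \ref{end7} applied to the reversed chain, using $\sqrt{R_+ + 0:M}=\sqrt{J+0:M}$ to get $H^t_J(M)\cong H^t_{R_+}(M)$ and then \cite[16.1.5]{BSH12} to secure the hypothesis $end(H^t_J(M))<\infty$. Your write-up merely makes explicit the symmetry of the linkage relation and the grade bookkeeping $\grade(I,M)=\grade(J,M)$, which the paper leaves implicit.
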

\begin{proof}
 By hypothesis $H^t_J(M)\cong H^t_{R_+}(M)$. So, using \cite[16.1.5]{BSH12} and \ref{end7}, the statement holds.
\end{proof}

$M$ is called \rCM~ with respect to $\fa$ of degree $n$ if $H^i_{\fa}(M)=0$ for all $i\neq n$.

\begin{thm}\label{end2}

   If \aR~  and $t:=\grade(R_+,M)$, then $end(H_{\fa}^i(M)) < \infty$ for all $i\neq t$ and  $end(H_{\fa}^t(M))<\infty$ or $H_{\fa}^t(M)_n\neq 0$ for all $n\gg0$. \\
  In a special case, $H_{\fa}^t(M)_n$ is a \fg ~$R_0$-module for all $n \in \Z$.

\end{thm}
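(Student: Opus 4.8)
The plan is to control $H^{i}_{\fa}(M)$ by comparing it, through the homogeneous \MV{} sequence already used in the proofs of \ref{end4} and \ref{end7}, with the three modules $H^{i}_{R_+}(M)$, $H^{i}_{\fa+R_+}(M)$ and $H^{i}_{I}(M)$, all of which are well understood. First I record the auxiliary facts. By \cite[2.6]{linkage1} the relation $\fa \stackrel{h}\sim_{(I;M)} R_+$ forces $\grade(\fa,M)=\grade(R_+,M)=\grade(I,M)=t$, and the homogeneous $M$-\rs{} generating $I$ has length $t$; since $I\subseteq \fa\cap R_+\subseteq R_+$ its members have positive degree. Because $I$ is generated by an $M$-\rs{} of length $t$, the Koszul (\v{C}ech) complex gives $H^{i}_{I}(M)=0$ for all $i\neq t$. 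Moreover $end(H^{i}_{R_+}(M))<\infty$ for all $i$ by \cite[16.1.5]{BSH12}, and $end(H^{i}_{\fa+R_+}(M))<\infty$ for all $i$ since $\fa+R_+\supseteq R_+$ (the result of \cite{jz} quoted in the introduction, or \ref{end6}).

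Feeding these into the graded \MV{} sequence from \cite[2.2]{cohomological3},
\[
\cdots \longrightarrow H^{i-1}_{I}(M)\longrightarrow H^{i}_{\fa+R_+}(M)\longrightarrow H^{i}_{\fa}(M)\oplus H^{i}_{R_+}(M)\longrightarrow H^{i}_{I}(M)\longrightarrow H^{i+1}_{\fa+R_+}(M)\longrightarrow\cdots,
\]
I argue as follows. For $i\neq t,t+1$ both $H^{i-1}_{I}(M)$ and $H^{i}_{I}(M)$ vanish, so $H^{i}_{\fa}(M)$ is a direct summand of $H^{i}_{\fa+R_+}(M)$ and hence $end(H^{i}_{\fa}(M))<\infty$; for $i=t+1$ the term $H^{t+1}_{I}(M)$ vanishes, so $H^{t+1}_{\fa}(M)\oplus H^{t+1}_{R_+}(M)$ is a homomorphic image of $H^{t+1}_{\fa+R_+}(M)$ and again has finite $end$. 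This settles $end(H^{i}_{\fa}(M))<\infty$ for every $i\neq t$.

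For $i=t$ the sequence reduces, using $H^{t-1}_{I}(M)=0$, to the graded exact sequence $0\to H^{t}_{\fa+R_+}(M)\to H^{t}_{\fa}(M)\oplus H^{t}_{R_+}(M)\to H^{t}_{I}(M)\to H^{t+1}_{\fa+R_+}(M)$. Evaluating in a degree $n\gg 0$, where the three $R_+$-containing modules vanish, yields $H^{t}_{\fa}(M)_n\cong H^{t}_{I}(M)_n$. Thus the dichotomy to be proved for $H^{t}_{\fa}(M)$ becomes exactly the tameness of $H^{t}_{I}(M)$: either $end(H^{t}_{I}(M))<\infty$, forcing $end(H^{t}_{\fa}(M))<\infty$, or $H^{t}_{I}(M)_n\neq 0$ for all $n\gg 0$, forcing $H^{t}_{\fa}(M)_n\neq 0$ for all $n\gg 0$. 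I would prove this tameness by induction on $t$: the collapsing spectral sequence $H^{p}_{(x_t)}(H^{q}_{(x_1,\dots,x_{t-1})}(M))\Rightarrow H^{p+q}_{I}(M)$ (as in \ref{end3}, via \cite[11.38]{rotman-spectral}) identifies $H^{t}_{I}(M)\cong H^{1}_{(x_t)}(K)$ with $K:=H^{t-1}_{(x_1,\dots,x_{t-1})}(M)$, reducing everything to $t=1$, where $H^{1}_{(x)}(K)=K_x/K$ and the graded pieces $(K_x/K)_n=\operatorname{coker}\bigl(K_n\to\varinjlim_k K_{n+k\deg x}\bigr)$ can be analysed directly; at the base $t=1$ one even has $K=M$ finitely generated, so $M_x/M$ is governed by the Hilbert function of $M$. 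The main obstacle sits precisely in the inductive step, where $K$ is a \emph{non}–finitely generated local cohomology module: I must control the graded components of the localization $K_x$ and show that the nonvanishing pattern of $(K_x/K)_n$ is eventually constant. Here I expect to exploit the finite generation of $M$ together with the fact that $x_1,\dots,x_t$ is an $M$-\rs{} (so $K$ is the first nonvanishing cohomology of $M$ with respect to $(x_1,\dots,x_{t-1})$ and therefore has only finitely many associated primes).

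Finally, the ``special case'' is the one in which $\Supp(M/IM)\subseteq V(R_+)$, equivalently (by \ref{end5}) $end(H^{t}_{\fa}(M))<\infty$. Then $R_+\subseteq\sqrt{I+0:M}$, so $H^{t}_{I}(M)\cong H^{t}_{R_+}(M)$ by \cite[2.1.9]{BSH12}, and the latter has finitely generated $R_0$-components by \cite[16.1.5]{BSH12}. Reading the graded \MV{} sequence at $i=t$ in each individual degree $n$, where $H^{t}_{\fa+R_+}(M)_n$, $H^{t}_{R_+}(M)_n$ and $H^{t+1}_{\fa+R_+}(M)_n$ are all finitely generated over $R_0$, then shows that $H^{t}_{\fa}(M)_n$ is a finitely generated $R_0$-module for every $n\in\Z$, which is the last assertion.
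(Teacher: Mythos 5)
Your reduction of the case $i\neq t$ and the identification $H^{t}_{\fa}(M)_n\cong H^{t}_{I}(M)_n$ for $n\gg 0$ via the \MV{} sequence are correct and essentially coincide with what the paper does (it routes the same computation through Proposition \ref{end4}). The problem is that the heart of the theorem is the dichotomy ``$end(H^{t}_{I}(M))<\infty$ or $H^{t}_{I}(M)_n\neq 0$ for all $n\gg 0$'', and this you do not prove. You reduce it to controlling the graded components of $H^{1}_{(x_t)}(K)=K_{x_t}/K$ for the non-finitely generated module $K=H^{t-1}_{(x_1,\dots,x_{t-1})}(M)$, and you explicitly leave open how to show that the nonvanishing pattern of $(K_{x_t}/K)_n$ stabilizes as $n\to+\infty$; nothing in the paper's toolkit (finitely many associated primes, Hilbert-function arguments) obviously closes this, and asymptotic behaviour of graded components of local cohomology of non-finitely generated modules is precisely the kind of question that is delicate in general. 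So as written the proposal does not establish the statement. The paper avoids this entirely by splitting on whether $\Supp(M/IM)\subseteq V(R_+)$. If it is contained, then $H^{i}_{I}(M)\cong H^{i}_{R_+}(M)$ and $end(H^{t}_{\fa}(M))<\infty$ follows from \cite[16.1.5]{BSH12} and \ref{end4}. If not, then $end(H^{t}_{\fa}(M))=\infty$ by \ref{end5}, and the eventual nonvanishing is proved by induction on $t$ applied to the \emph{finitely generated} module $M/x_1M$: since $H^{t-1}_{\fa}(M)=0$, the sequence $0\to M\xrightarrow{\,x_1\,}M(l)\to (M/x_1M)(l)\to 0$ gives an injection $H^{t-1}_{\fa}(M/x_1M)_{n+l}\hookrightarrow H^{t}_{\fa}(M)_n$, the h-link descends to $\fa/(x_1)\stackrel{h}\sim_{(I/(x_1);M/x_1M)}R_+/(x_1)$, and the base case $t=0$ is handled by Kirby's theorem $R_1M_n=M_{n+1}$ for $n\gg 0$. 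That induction never leaves the category of finitely generated modules, which is exactly what your version is missing.

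A secondary point: in the ``special case'' you assert that $H^{t}_{\fa+R_+}(M)_n$ is finitely generated over $R_0$ as though this followed from $\fa+R_+\supseteq R_+$; it does not (for $\fb\supseteq R_+$ the components $H^{i}_{\fb}(M)_n$ need not be finitely generated once $i$ exceeds the filter grade). One needs $t\le\fgrade(\fa_0+R_+,R_+,M)$ together with \cite[1.7]{jz}, which is how the paper argues. This is fixable, unlike the gap above.
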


\begin{proof}
  The case $i\neq t$ follows from \ref{end4} and \cite[16.1.5(ii)]{BSH12}. Also, we have $H_{\fa}^t(M)_n \cong H_I^t(M)_n$ for all $n\gg 0$. We consider two cases:
  \begin{itemize}
    \item[case 1:] Let $\Supp (M/IM) \nsubseteq V(R_+)$. By \ref{end5} and \cite[16.1.5(ii)]{BSH12}, $end (H_{\fa}^t(M))=\infty$. Now, we prove, by induction on $t$, that $H_{\fa}^t(M)_n\neq 0$ for all $n\gg 0$.
        \\ If $t=0$, then  $\Gamma_\fa (M)_n=\Gamma_{\underline{0}}(M)_n=M_n$ for all $n\gg0$. On the other hand, by \cite[Theorem 1]{kirby1973artinian}, $R_1M_n=M_{n+1}$ for all $n\gg 0$, thus $\Gamma_{\fa}(M)_n\neq 0$ for all $n\gg 0$.\\
        Let $t>0$ and assume, inductively, that the claim holds for $t-1$. Let $I=(x_1,x_2,\ldots ,x_t)$
        and $deg(x_1)=l$. Now, the \homo ~ exact \seq ~
         \[ 0\longrightarrow M \stackrel{.x_1} \longrightarrow M(l) \longrightarrow (M/x_1M)(l) \longrightarrow 0 \]
          and \cite[2.6(i)]{linkage1} yield the following exact \seq ~ of $R_0$-modules for all $n \in \Z$,
         \begin{equation*}
           0\longrightarrow H_{\fa}^{t-1}(M/x_1M)_{n+l} \longrightarrow H_{\fa}^t(M)_{n} \stackrel{.x_1} \longrightarrow H_{\fa}^t(M)_{n+l} \longrightarrow \ldots ~.
         \end{equation*}
        Since $x_1 \in I$, $\fa/(x_1) \stackrel{h}\sim _{(I/(x_1);M/x_1M)} R_+/(x_1)$ and, by the inductive hypothesis, $H_{\frac{\fa}{(x_1)}}^{t-1}(M/x_1M)_n \neq 0$ for all $n \gg 0$. Hence, $H_{\fa}^t(M)_n \neq 0$ for all $n \gg 0$.
    \item[case 2:] Now, assume that $\Supp (M/IM) \subseteq V(R_+)$. Then $ \sqrt{0:M + I} = \sqrt{0:M + R_+}$ and $H_I^i(M) \cong H_{R_+}^i(M)$ for all $i$. So, by \cite[16.1.5]{BSH12}, we have
    \begin{enumerate}
      \item $end(H_I^t(M))< \infty$,
      \item $H_I^t(M)_n$ is \fg ~ $R_0$-module for all $n$,
      \item $M$ is \rCM ~with respect to $R_+$ of degree $t$.
    \end{enumerate}
    Therefore, in view of \ref{end4}, $end(H_\fa^t(M))< \infty$. Also, using \cite[3.4]{nagel1994cohomological}, there are
    \homo ~ isomorphisms
     \begin{equation}\label{iso3}
       H^i_{\fa}(M) \cong H^{i-t}_{\fa}(H^t_I(M)) \cong H^{i-t}_{\fa +R_+}(H^t_I(M))
         \cong H^i_{\fa +R_+}(M) \qquad  \text{for all}~ i\geq t.  \tag{3.1}
     \end{equation}

    Using \cite[2.6]{linkage1}, $t  \leq \fgrade(\fa_0 + R_+,R_+,M)$. Therefore, $H_{\fa_0 + R_+} ^t(M)_n$ is a \fg ~$R_0$-module for all $n \in \Z$, by \cite[1.7]{jz}. As a result, by \eqref{iso3}, $H_{\fa}^t(M)_n$ is a \fg ~$R_0$-module for all $n \in \Z$.

  \end{itemize}
\end{proof}

 As wee have seen in the proof of \ref{end2}, if $R_+$ is h-linked by $I$ over $M$ and $\Supp (M/IM) \subseteq V(R_+)$ then $M$ is \rCM ~ with respect to $R_+$. However, the converse does not hold any more, as the following example shows. Although, it does in some special cases, see Proposition \ref{rCM} and \ref{CM}.

\begin{exam}
  Assume that \Ro ~is a domain and $\dim R_0=2$. Set $R=R_0[x]$. So, there exists a  non zero prime ideal $\p_0$ of $R_0$ such that $ \p_0 \subsetneq \m_0$. By \ref{x link}, for any $0\neq r_0 \in \p_0$, $(r_0) \stackrel{h}\sim _{((r_0x);R)} (x)$  and
  $\Supp(R/(r_0x))= V(r_0) \bigcup V(x) \nsubseteq V(x)$,
  while $R$ is \rCM ~with respect to $(x)$ of degree 1.
\end{exam}

The following proposition considers a case where the irrelevant ideal can be generated by an $M$-\rs ~under radical.

\begin{prop}\label{rCM}
  Let \Ro ~be local and $M$ be \rCM ~with respect to $R_+$ of degree $t$. Then there exists a maximal \homo ~ $M$-\rs ~$I$ in $R_+$ such that $\Supp(M/IM) \subseteq V(R_+)$. In other words, $R_+$ can be generated by a \homo ~ $M$-\rs ~ under radical.
\end{prop}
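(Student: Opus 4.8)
The plan is to argue by induction on $t=\grade(R_+,M)=\cd(R_+,M)$; these coincide because $M$ is \rCM~with respect to $R_+$ of degree $t$. For the base case $t=0$ one has $H^i_{R_+}(M)=0$ for all $i\geq 1$. Setting $\bar M:=M/\Gamma_{R_+}(M)$, the long exact \seq~of local cohomology gives $\Gamma_{R_+}(\bar M)=0$ together with $H^i_{R_+}(\bar M)\cong H^i_{R_+}(M)=0$ for $i\geq 1$, so $H^i_{R_+}(\bar M)=0$ for every $i\geq 0$; hence $\grade(R_+,\bar M)=\infty$, i.e. $R_+\bar M=\bar M$. Since $\bar M$ is a \fggR~it is bounded below in degrees, so the graded \NAK~forces $\bar M=0$. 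Thus $M=\Gamma_{R_+}(M)$ is $R_+$-torsion and the empty (maximal) \rs~$I=0$ satisfies $\Supp(M/IM)=\Supp M\subseteq V(R_+)$.

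For the inductive step ($t\geq 1$) it suffices to produce a \homo~$M$-\rs~element $x_1\in R_+$ such that $M/x_1M$ is \rCM~with respect to $R_+$ of degree $t-1$: then, by the inductive hypothesis applied to $M/x_1M$ (over the same local base $(R_0,\m_0)$), the ideal $R_+$ is generated over $M/x_1M$ by a \homo~$(M/x_1M)$-\rs~$x_2,\dots,x_t$ under radical, and $x_1,\dots,x_t$ is the desired maximal \homo~$M$-\rs~in $R_+$ with $\Supp(M/(x_1,\dots,x_t)M)\subseteq V(R_+)$. To decide when $M/x_1M$ is \rCM~of degree $t-1$, I would apply $H^\bullet_{R_+}(-)$ to $0\to M(-d)\xrightarrow{\ \cdot x_1\ }M\to M/x_1M\to 0$ with $d=\deg x_1$. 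As $H^i_{R_+}(M)=0$ for $i\neq t$, the long exact \seq~gives $H^i_{R_+}(M/x_1M)=0$ for $i\neq t-1,t$ and $H^t_{R_+}(M/x_1M)\cong H^t_{R_+}(M)/x_1H^t_{R_+}(M)$, while $\grade(R_+,M/x_1M)=t-1$ since $x_1$ is $M$-\rs. Hence $M/x_1M$ is \rCM~of degree $t-1$ if and only if multiplication by $x_1$ is surjective on $N:=H^t_{R_+}(M)$.

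Everything therefore reduces to the key step: finding a \homo~element $x_1\in R_+$ that is simultaneously $M$-\rs~and surjective on $N$. I would select $x_1$ by \homo~prime avoidance so as to avoid every prime in the finite set $\Ass_R M\cup\operatorname{Coass}_R(N)$. Since $\grade(R_+,M)=t>0$, no $\p\in\Ass_R M$ contains $R_+$; and since $N$ is the top local cohomology $H^{\cd(R_+,M)}_{R_+}(M)$, each $\p\in\operatorname{Coass}_R(N)$ has $\cd(R_+,R/\p)=t\geq 1$, so $\p\nsupseteq R_+$ (if $R_+\subseteq\p$ then $R/\p$ is $R_+$-torsion and $\cd(R_+,R/\p)=0$, a contradiction). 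As none of these finitely many primes contains $R_+$, \homo~prime avoidance produces a positive-degree \homo~$x_1\in R_+$ outside all of them; then $x_1$ is $M$-\rs, and avoiding $\operatorname{Coass}_R(N)$ yields $x_1N=N$. Here the locality of $(R_0,\m_0)$ is precisely what lets a positive-degree element be genuinely surjective on $N$ (already in the rank-one case $R=R_0[x]$, only unit multiples of powers of $x$ succeed), reflecting that a \homo~element avoids a coassociated prime $\p\nsupseteq R_+$ exactly when its coefficients stay outside $\p$.

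The main obstacle is this last point: controlling surjectivity of multiplication by $x_1$ on the non-\fg~module $N=H^t_{R_+}(M)$. I expect to need the structural facts that $\operatorname{Coass}_R(N)$ is finite, that an element lying outside every coassociated prime acts surjectively on $N$, and that the coassociated primes of the top local cohomology avoid $V(R_+)$; these may be supplied either through the theory of (co)attached primes of top local cohomology or by passing to a \fg~graded Matlis dual of $N$ and reading off its associated primes. Once these are established the induction runs without further difficulty, and the reformulation that $\Supp(M/IM)\subseteq V(R_+)$ is equivalent to $M/IM$ being a \fg~$R_0$-module makes the final conclusion transparent.
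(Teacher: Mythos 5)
Your induction skeleton is sound: the base case is correct (and in fact slightly cleaner than the paper's, which instead derives $end(M)<\infty$ from $\dim M/\m_0M=0$ via Kirby's theorem and Nakayama), and the reduction of the inductive step to "find a \homo\ $M$-regular element $x_1\in R_+$ with $x_1H^t_{R_+}(M)=H^t_{R_+}(M)$" is a correct reformulation. The problem is that this existence claim \emph{is} the inductive step, and your route to it rests on three assertions about $N:=H^{\cd(R_+,M)}_{R_+}(M)$ that you explicitly leave unproved: that $\operatorname{Coass}_R(N)$ is finite, that its members are graded (you need this, or some substitute, for \homo\ prime avoidance to return a \emph{homogeneous} element), and that none of them contains $R_+$. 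None of these is routine here: $N$ is in general neither \fg\ nor Artinian, so it need not admit a secondary representation, the attached-prime results for top local cohomology you allude to do not apply off the shelf, and the finiteness/gradedness of $\operatorname{Coass}$ for such a module (or the finite generation of a graded Matlis dual of $N$) would each require a proof or a precise reference. As written, the key step is a list of what you would need rather than an argument.

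The paper sidesteps $N$ entirely, and this is where locality of \Ro\ is really used: by Brodmann's formula $\cd(R_+,M)=\dim M/\m_0M$, being \rCM\ of degree $t$ is equivalent to $\grade(R_+,M)=\dim M/\m_0M=t$, and both invariants are controlled by finitely many \emph{graded} primes attached to \emph{\fg} modules, namely $\Ass_RM$ and $\minAss_R(M/\m_0M)$. For $t>0$ none of these contains $R_+$ (a $\p\in\Ass_RM$ with $R_+\subseteq\p$ would give $\grade(R_+,M)=0$, while a minimal prime of $\Supp(M/\m_0M)$ containing $R_+$ would have to be the graded maximal ideal $\m_0+R_+$ and hence the unique such prime, forcing $\dim M/\m_0M=0$). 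Homogeneous prime avoidance then yields $x$ with $\grade(R_+,M/xM)=t-1=\dim\frac{M/xM}{\m_0(M/xM)}$, i.e.\ $M/xM$ is \rCM\ of degree $t-1$, and the surjectivity of $x$ on $N$ that you are after drops out as a corollary ($H^t_{R_+}(M/xM)=0$) instead of being an input. The cheapest repair of your argument is therefore to replace $\operatorname{Coass}_R(N)$ by $\minAss_R(M/\m_0M)$ and invoke the $\cd$--$\dim$ formula, which is exactly what the paper does.
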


\begin{proof}
  Assume that $t=0$. By \cite[2.3]{brodmann(survey)}, $\dim M/\m_0M =0$. Therefore,
   $M/\m_0M$ is Artinian and $end(M/\m_0M)<\infty $, using \cite[Theorem 1]{kirby1973artinian}. Hence, by \NAK, $end(M)<\infty$. This implies that $M$ is $R_+$-torsion and that $\Supp M \subseteq V(R_+)$.
  \\ Now, let $t>0$ and assume inductively that the statement holds for $t-1$. As $t>0$, $R_+ \nsubseteq \bigcup _{\p \in (Min Ass(M/\m_0M) \cup Z(M))} \p$, where $Z(M)$ denotes the set of zero divisors on $M$. So, by \cite[16.1.2]{BSH12}, there exists a \homo~ element
  \begin{equation*}
  x \in R_+ \backslash \bigcup_{Min Ass(M/\m_0M) \cup Z(M)} \p.
  \end{equation*}
     Therefore, $\dim \frac{M/xM}{\m_0(M/xM)}=t-1= \grade(R_+,M/xM) $.
   In other words, using \cite[2.3]{brodmann(survey)}, $M/xM$ is \rCM~ with respect to $R_+$ of degree $t-1$ and, by the induction hypothesis, there is a maximal \homo ~ $M/xM$-\rs ~ $I'$ in $R_+$ such that $\Supp\frac{M}{(I'+<x>)M} = \Supp \frac{M/xM}{I'(M/xM)} \subseteq V(R_+)$. Now, the result follows by induction.
\end{proof}

 \begin{prop}\label{CM}
 Let $I$ be an ideal generated by a maximal $M$-\rs ~ in $R_+$. Then the following statements hold.
 \begin{itemize}
   \item [(i)] If $R_0$ is a field then $\Supp M/IM \subseteq V(R_+)$ \ifff~ $M$ is a Cohen-Macauly module.
   \item [(ii)]  $\Supp M/IM \nsubseteq V(R_+)$  \ifff~ $\grade(I,M) = \fgrade(R_+,I,M)$.
 \end{itemize}
 \end{prop}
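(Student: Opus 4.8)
Throughout write $t:=\grade(R_+,M)$ and let $x_1,\dots,x_t$ denote the maximal homogeneous $M$-\rs\ in $R_+$ that generates $I$. The plan rests on two preliminary observations. First, since $I=(x_1,\dots,x_t)$ is generated by an $M$-\rs, one has $\grade(I,M)=t$ (the sequence is already maximal in $I$, as $I$ annihilates $M/IM\neq 0$); moreover $\cd(I,M)\leq t$ because $I$ is generated by $t$ elements, so together with the vanishing below the grade we get $H^i_{I}(M)=0$ for all $i\neq t$. Second, I would record the support identity $\Supp H^t_I(M)=\Supp M/IM$. The inclusion ``$\subseteq$'' is automatic, since $H^t_I(M)$ is $I$-torsion and an $R/\!\operatorname{Ann}M$-module. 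For ``$\supseteq$'' I would localize at $\p\in\Supp M/IM$: because $I_\p M_\p\neq M_\p$, the sequence $x_1,\dots,x_t$ stays $M_\p$-regular, whence $\grade(I_\p,M_\p)=t=\cd(I_\p,M_\p)$ and so $H^t_I(M)_\p\cong H^t_{I_\p}(M_\p)\neq 0$. (Equivalently, the transition maps in the direct-limit description of the top local cohomology of a \rs\ are injective, so $M/IM$ embeds into $H^t_I(M)$.)

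For (i), note that $R_0$ being a field makes $R_+=\m$ the homogeneous maximal ideal, so $V(R_+)=\{\m\}$ and $t=\grade(\m,M)=\depth M$. Since $M/IM\neq 0$ by \NAK, the condition $\Supp M/IM\subseteq V(R_+)=\{\m\}$ is equivalent to $\dim M/IM=0$. Because $x_1,\dots,x_t$ is an $M$-\rs, one has $\dim M/IM=\dim M-t$, so $\dim M/IM=0$ holds iff $\dim M=t=\depth M$, i.e.\ iff $M$ is Cohen--Macaulay. Chaining these equivalences gives (i).

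For (ii), I would invoke the Chu--Gu characterization \cite[2.4]{lcm}: when $\Supp M/IM\nsubseteq V(R_+)$,
\[
\fgrade(R_+,I,M)=\max\{\,i\mid H^j_{I}(M)_n=0 \text{ for all } n\gg 0 \text{ and all } j<i\,\}.
\]
For the backward implication I argue by contraposition: if $\Supp M/IM\subseteq V(R_+)$ then by definition $\fgrade(R_+,I,M)=\infty$, which differs from $\grade(I,M)=t<\infty$. For the forward implication, assume $\Supp M/IM\nsubseteq V(R_+)$. By the support identity $\Supp H^t_I(M)\nsubseteq V(R_+)$, so $H^t_I(M)$ is not $R_+$-torsion; since any graded module bounded above in degree is $R_+$-torsion, this forces $end(H^t_I(M))=\infty$, that is, $H^t_I(M)_n\neq 0$ for arbitrarily large $n$. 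Feeding this into the displayed formula, every $i\leq t$ lies in the set (as $H^j_I(M)=0$ for $j<t$), while $i=t+1$ fails because it would require $H^t_I(M)_n=0$ for all $n\gg 0$; hence $\fgrade(R_+,I,M)=t=\grade(I,M)$.

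The main obstacle is the support identity $\Supp H^t_I(M)=\Supp M/IM$ used in (ii); everything else is bookkeeping with grade, dimension, and the cited characterization. I expect the cleanest justification to be the localization argument above, and I would take particular care to verify that $x_1,\dots,x_t$ remains $M_\p$-regular after localizing at $\p\in\Supp M/IM$ — this is exactly the point where the hypothesis that $I$ is generated by an $M$-\rs\ (rather than an arbitrary ideal of grade $t$) is indispensable.
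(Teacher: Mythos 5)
Your argument is correct in both parts, but it diverges from the paper's (very terse) proof in a way worth noting. For (i), the paper's forward direction passes through local cohomology: $\Supp M/IM\subseteq V(R_+)$ gives $\sqrt{I+0:M}=\sqrt{R_+ +0:M}$, hence $H^i_I(M)\cong H^i_{R_+}(M)$, and then $H^i_I(M)=0$ for $i\neq t$ (regular sequence of length $t$) is combined with \cite[6.2.9]{BSH12} to get $\depth M=\dim M$; your dimension count $\dim M/IM=\dim M-t$ reaches the same conclusion more elementarily and matches the paper's own converse direction. For (ii), the paper only cites \cite[2.4]{lcm} and \cite[3.3.1]{BSH12}; the intended argument is that once $H^j_I(M)=0$ for all $j\neq t$, the Chu--Gu maximum can only equal $t$ or $\infty$, and under $\Supp M/IM\nsubseteq V(R_+)$ the quantity $\fgrade(R_+,I,M)$ is finite by definition, which already pins it to $t$ --- so the support identity $\Supp H^t_I(M)=\Supp M/IM$ that you single out as the main obstacle is not actually needed. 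That said, your localization proof of that identity is sound (regularity of the sequence localizes, and $\grade(I_{\p},M_{\p})=t$ forces $H^t_{I_{\p}}(M_{\p})\neq 0$; it is the regular-sequence analogue of Lemma \ref{supp}), and the deduction $end(H^t_I(M))=\infty$ from non-$R_+$-torsionness is correct, so your route gives a valid, if slightly longer, proof. What your approach buys is a self-contained argument and the extra information that $H^t_I(M)_n\neq 0$ for arbitrarily large $n$; what the paper's buys is brevity, at the cost of leaving the ``$t$ versus $\infty$'' dichotomy implicit.
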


 \begin{proof}
 \begin{itemize}
   \item [(i)] If $\Supp M/IM \subseteq V(R_+)$ then $\sqrt{0:M+I}=\sqrt{0:M+ R_+}$ and $H^i_{I}(M)\cong H^i_{R_+}(M)$ for all $i \in \N_0$. It follows from \cite[6.2.9]{BSH12} and the fact that $I$ is generated by $M$-\rs , that $M$ is Cohen-Macaulay. Now, assume that $M$ is Cohen-Macaulay. Then $\dim M=\grade(I,M)$. Hence, $\dim M/IM = 0$ and  $\Supp M/IM \subseteq \{R_+\}$.
   \item [(ii)] The result follows from \cite[2.4]{lcm} and \cite[3.3.1]{BSH12}.
 \end{itemize}

 \end{proof}

\begin{defrem}\label{def}
 \noindent
 \begin{itemize}
   \item [(i)] Let  $N=\bigoplus_{n \in \Z}N_n$ be a graded $R$-module. Then following \cite{thomasmarley},
   \begin{itemize}
     \item [$\bullet$] $N$ is called finitely graded if  $N_n=0$  for all but finitely many $n \in \Z$;
     \item [$\bullet$] $g_{\fa}(N) := sup\{k \in \N_0 | H_{\fa}^i(N)$ is finitely graded for all $i < k\}$;
     \item [$\bullet$] $\faR (N) := sup\{k \in \N_0 | R_+ \subseteq \sqrt{0:H_{\fa}^i(N)}$ for all $i < k\}$;
     \item [$\bullet$] $N$ is called tame, if the set $\{n\in \Z | N_n=0, N_{n+1}\neq 0\}$ is finite.
   \end{itemize}
  Note that, by \cite[2.3]{thomasmarley}, if $N$ is \fg , then $g_{\fa}(N)=\faR (N)$.

  \item [(ii)] Let \aR ~and $\Supp M/IM \nsubseteq V(R_+)$, then using  \cite[2.6(i)]{linkage1}, \ref{end2} and (i),  we have $f_{\fa}^{R_+}(M)=\grade(R_+,M)$.

\end{itemize}
\end{defrem}

In \cite[2.2]{jzh}, the authors studied tameness of $H^{\faR (M)}_{\fa}(M)$ under the assumption that $\fa \supseteq R_+$. In the following Theorem, we consider this problem without any restriction on $\fa$. Although, the proof is a modification of \cite[2.2]{jzh}, we bring it here for the reader's convenience.  It will be used later in the paper, too.

\begin{thm}\label{tame}
  Let \Ro ~be local and $f_{\fa}^{R_+}(M) < \infty$. Then $H^{f_{\fa}^{R_+}(M)}_{\fa}(M)$ is tame.
\end{thm}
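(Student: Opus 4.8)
The plan is to prove tameness by induction on $c:=\faR(M)$, transferring information across a generic degree-one element. Since $M$ is \fg, \cite[2.3]{thomasmarley} (recorded in \ref{def}(i)) gives $\faR(M)=g_{\fa}(M)=c$, so $H^i_{\fa}(M)$ is finitely graded for every $i<c$; in particular $H^i_{\fa}(M)_n=0$ whenever $i<c$ and $|n|\gg 0$. The case $c=0$ is immediate, since $H^0_{\fa}(M)=\Gamma_{\fa}(M)$ is a submodule of the \fg module $M$, hence bounded below, so $H^0_{\fa}(M)_n=0$ for $n\ll 0$ and the set of jumps is empty.

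Before the inductive step I would arrange a good degree-one element. First replace $M$ by $M/\Gamma_{R_+}(M)$: as $\Gamma_{R_+}(M)$ is \fg and $R_+$-torsion it is finitely graded, and using $H^i_{\fa}(L)\cong H^i_{\fa+R_+}(L)$ for $R_+$-torsion $L$ (as in the proof of \ref{end3}) together with \cite[16.1.5]{BSH12} one sees that $H^i_{\fa}(\Gamma_{R_+}(M))$ is finitely graded for all $i$; hence $H^i_{\fa}(M)$ and $H^i_{\fa}(M/\Gamma_{R_+}(M))$ agree for $|n|\gg 0$, leaving $\faR$ and the tameness of the top module unchanged. Thus I may assume $\Gamma_{R_+}(M)=0$, so no associated prime of $M$ contains $R_+$. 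After the faithfully flat base change $R_0\to R_0[t]_{\m_0R_0[t]}$, which preserves local cohomology components, finite gradedness, tameness and $\faR$, the residue field of $R_0$ is infinite; here the hypothesis that $R_0$ is local is used. By prime avoidance there is then $x\in R_1$ that is a nonzerodivisor on $M$.

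For the inductive step assume $c\ge 1$. From $0\to M\xrightarrow{\,x\,} M(1)\to (M/xM)(1)\to 0$ the long exact sequence of graded local cohomology gives, in degree $n$,
\[
H^{c-1}_{\fa}(M)_{n+1}\longrightarrow H^{c-1}_{\fa}(M/xM)_{n+1}\stackrel{\delta}{\longrightarrow} H^c_{\fa}(M)_n\stackrel{\cdot x}{\longrightarrow} H^c_{\fa}(M)_{n+1}.
\]
For $i\le c-2$ the module $H^i_{\fa}(M/xM)$ sits between the finitely graded modules $H^i_{\fa}(M)$ and $H^{i+1}_{\fa}(M)$, hence is finitely graded, so $\faR(M/xM)\ge c-1$. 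Because $H^{c-1}_{\fa}(M)$ is finitely graded, the left-hand arrow vanishes for $|n|\gg 0$, and there $\delta$ identifies $\ker\!\big(\cdot x\colon H^c_{\fa}(M)_n\to H^c_{\fa}(M)_{n+1}\big)\cong H^{c-1}_{\fa}(M/xM)_{n+1}$. If $\faR(M/xM)>c-1$ the module $H^{c-1}_{\fa}(M/xM)$ is finitely graded; otherwise $\faR(M/xM)=c-1$ and, by the inductive hypothesis, it is tame. In either case its components are constantly zero or constantly nonzero on each of the two tails $n\to\pm\infty$. On a tail where they vanish the kernel above is $0$, so $\cdot x$ is injective and the nonvanishing locus of $H^c_{\fa}(M)$ is an up-set there, producing at most one jump; on a tail where they are nonzero, $H^c_{\fa}(M)_n\supseteq\ker(\cdot x)\neq 0$ throughout, producing no jump. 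Together with the finitely many degrees in the bounded middle range, the set $\{n\mid H^c_{\fa}(M)_n=0,\ H^c_{\fa}(M)_{n+1}\neq 0\}$ is finite, so $H^c_{\fa}(M)$ is tame.

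The main obstacle is the preparatory reduction rather than the induction: one must check carefully that discarding $\Gamma_{R_+}(M)$ and extending the residue field leave both $\faR(M)$ and the tameness of $H^c_{\fa}(M)$ intact, so that an honest degree-one nonzerodivisor is available, and that the connecting map $\delta$ identifies $\ker(\cdot x)$ with the single component $H^{c-1}_{\fa}(M/xM)_{n+1}$ outside a bounded range of degrees. Once this identification is secured, the fact that a tame (or finitely graded) module is eventually zero or eventually nonzero on each tail forces the nonvanishing locus of $H^c_{\fa}(M)$ to have only finitely many jumps, which is precisely tameness.
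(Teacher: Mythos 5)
Your argument is correct and follows essentially the same route as the paper's proof (itself a modification of \cite[2.2]{jzh}): flat base change to make the residue field infinite, reduction modulo $\Gamma_{R_+}(M)$, a degree-one nonzerodivisor $x$, and induction on $\faR(M)$ via the dichotomy $\faR(M/xM)>\faR(M)-1$ versus $\faR(M/xM)=\faR(M)-1$, with the multiplication-by-$x$ sequence controlling the jumps on both tails. The only slip is in the base case: boundedness below of $\Gamma_{\fa}(M)$ does not make the jump set empty, and you should instead invoke the fact that a finitely generated graded module is tame (e.g.\ via \cite[Theorem 1]{kirby1973artinian}, as the paper does), which is a one-line repair.
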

\begin{proof}
  Let $y$ be an indeterminate. Set $R'_0=R_0[y]_{\m_0[y]}$, $R'=R\bigotimes_{R_0} R'_0$ and $M'=M\bigotimes_{R_0}R'_0$.
  $R'_0$ is a faithfully flat $R_0$-algebra so, by \cite[16.2.2(iv)]{BSH12},
  $H^i_{\fa}(M)_n\bigotimes_{R_0}R'_0\cong H^i_{\fa R'}(M')_n$ for all $i \in \N_0$ and all $n \in \Z$. This results
  $f_{\fa}^{R_+}(M)=f_{\fa R'}^{R'_+}(M)$. Thus, replacing $R$ by $R'$, we can assume that $R_0/\m_0$ is an infinite filed. Now, we prove the assertion by induction on $f:=f_{\fa}^{R_+}(M)$.\\
  Let $f=0$. By \cite[Theorem 1]{kirby1973artinian}, $\Gamma_{\fa}(M)_n=0$ for all $n \ll 0$ and
  $R_1\Gamma_{\fa}(M)_n=\Gamma_{\fa}(M)_{n+1}$ for all $n\gg 0$ that makes $\Gamma_{\fa}(M)_n\neq 0$ for all $n \gg 0$.
  Now, assume that $f\geq 1$ and the result has been proved for $f-1$. \\
   $\Gamma_{R_+}(M)$ is a \fGR ~ so, using \cite[2.2]{thomasmarley}, $H^i_{\fa}(\Gamma_{R_+}(M))$ is \fG ~ for all $i \in \N_0$. Hence, the exact \seq
  \[
  \ldots \longrightarrow H^i_{\fa}(\Gamma_{R_+}(M))_n \longrightarrow H^i_{\fa}(M)_n \longrightarrow
  H^i_{\fa}(M/\Gamma_{R_+}(M))_n \longrightarrow H^{i+1}_{\fa}(\Gamma_{R_+}(M))_n\longrightarrow \ldots
  \]
  implies that $H^i_{\fa}(M)_n \cong H^i_{\fa}(M/\Gamma_{R_+}(M))_n$ for all $i \in \N_0$ and all $n \in \Z \setminus X$, where $X$ is a finite set. So, replacing $M$ with $M/\Gamma_{R_+}(M)$, we can assume that $M$ is $R_+$-torsion free and that there exists a \homo ~ element $x \in R_1$ which is a non-zero deviser on $M$, by \cite[16.1.4(ii)]{BSH12}. Now, consider the \homo ~ exact \seq
  \[
  0\longrightarrow M \stackrel{x} \longrightarrow  M(1) \longrightarrow (M/xM)(1) \longrightarrow 0.
  \]
  It yields the following exact \seq ~of $R_0$-modules
  \[
  \ldots \longrightarrow H^{i-1}_{\fa}(M)_{n+1} \longrightarrow H^{i-1}_{\fa}(M/xM)_{n+1} \longrightarrow H^{i}_{\fa}(M)_{n}
  \stackrel{x} \longrightarrow  H^{i}_{\fa}(M)_{n+1} \longrightarrow \ldots ~.
  \]
  Therefore, by \ref{def}, $\faR(M/xM) \geq \faR(M)-1$.\\
   If $\faR(M/xM) \gneq f-1$, by \ref{def},
  $0 \longrightarrow H^{f}_{\fa}(M)_{n} \stackrel{x} \longrightarrow  H^{f}_{\fa}(M)_{n+1} $ is exact for all $n\gg 0$ and all $n\ll 0$ that shows $H^f_{\fa}(M)$ is tame.\\
   Also, if $\faR(M/xM) = f-1$, again by \ref{def}, we have the following exact \seq
  \[
  0\longrightarrow H^{f-1}_{\fa}(M/xM)_{n+1} \longrightarrow H^{f}_{\fa}(M)_{n}
  \stackrel{x} \longrightarrow  H^{f}_{\fa}(M)_{n+1}
  \]
  for all $n\gg0$ and all $n\ll 0$. But, by induction, $ H^{f-1}_{\fa}(M/xM)$ is tame, which requires $H^{f}_{\fa}(M)$ is tame.

\end{proof}

Regards $t:=\grade(\fa , M)\leq \faR(M)$ and \ref{def}, $H^t_{\fa}(M)$ is tame.

Let $N$ be an $R$-module. The cohomological dimension of $N$ with respect to $\fa$ is defined to be
\[
\cd(\fa,N):=sup \{i \in \Z | H^i_{\fa}(N)\neq 0 \}.
\]

The following Theorem considers tameness of $H^{\cd(R_+,M)}_{\fa+R_+}(M)$ with some restrictions on $M$ or linkedness of $\fa$ with $R_+$.

\begin{thm}\label{faR2}
  Let \Ro ~be local  and $\cd(R_+,M)\neq 0$. Then $f_{\fa +R_+}^{R_+}(M) <\infty$ and  $H^{\cd(R_+,M)}_{\fa+R_+}(M)$ is tame
  in each of the following cases:

  \begin{itemize}
    \item [(i)] $M$ is \rCM ~ with respect to $R_+$;
    \item [(ii)] \aR.
  \end{itemize}
\end{thm}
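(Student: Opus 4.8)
The plan is to route everything through Theorem \ref{tame}. Since $(R_0,\m_0)$ is local, that theorem applied to the ideal $\fa+R_+$ shows that \emph{as soon as} $f_{\fa+R_+}^{R_+}(M)<\infty$ the module $H^{f_{\fa+R_+}^{R_+}(M)}_{\fa+R_+}(M)$ is tame; the real work is to prove finiteness of $f_{\fa+R_+}^{R_+}(M)$ and to locate it relative to the critical degree $c:=\cd(R_+,M)$. By \ref{def}(i), applied to the \fg~module $M$, we have $f_{\fa+R_+}^{R_+}(M)=g_{\fa+R_+}(M)$, i.e. it is the first $i$ for which $H^i_{\fa+R_+}(M)$ fails to be finitely graded. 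The common tool for both cases is the spectral sequence $H^p_{\fa}(H^q_{R_+}(M))\Rightarrow H^{p+q}_{\fa+R_+}(M)$ of \cite[11.38]{rotman-spectral} (the one used in \ref{end3}, with the roles of the two ideals exchanged), together with the observation that each $H^q_{R_+}(M)$ is $R_+$-torsion, so by \cite[14.1.12]{BSH12} its $\fa$-cohomology is computed degreewise over $R_0$: writing $\fa_0:=\fa\cap R_0$, one has $H^p_{\fa}(H^q_{R_+}(M))_n\cong H^p_{\fa_0}(H^q_{R_+}(M)_n)$ with each $H^q_{R_+}(M)_n$ a \fg~$R_0$-module.

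In case (i), $M$ is \rCM~with respect to $R_+$, so $c=\grade(R_+,M)$ and $H^q_{R_+}(M)=0$ for $q\neq c$; the spectral sequence collapses and gives $H^i_{\fa+R_+}(M)\cong H^{i-c}_{\fa}(H^c_{R_+}(M))$, which vanishes for $i<c$. Thus $H^i_{\fa+R_+}(M)=0$ for $i<c$, whence $f_{\fa+R_+}^{R_+}(M)\geq c$, and everything is governed by the single $R_+$-torsion module $H^c_{R_+}(M)$. Tameness of $H^c_{\fa+R_+}(M)\cong\Gamma_{\fa}(H^c_{R_+}(M))$ then follows from a dichotomy: if it is finitely graded it is tame outright, while if it is not finitely graded then $f_{\fa+R_+}^{R_+}(M)=c$ and Theorem \ref{tame} applies. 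For the finiteness $f_{\fa+R_+}^{R_+}(M)<\infty$ I would use the degreewise description $H^i_{\fa+R_+}(M)_n\cong H^{i-c}_{\fa_0}(H^c_{R_+}(M)_n)$: granting that $H^c_{R_+}(M)$ is not finitely graded (its components survive for infinitely many $n\ll 0$ once $c\geq 1$), each such $H^c_{R_+}(M)_n$ has nonzero $\fa_0$-cohomology in some degree $p$ with $0\leq p\leq\di R_0$, and since there are finitely many admissible $p$ but infinitely many such $n$, a pigeonhole argument produces a single $i=c+p$ with $H^i_{\fa+R_+}(M)$ not finitely graded, so $f_{\fa+R_+}^{R_+}(M)\leq c+p<\infty$.

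In case (ii), $\fa\stackrel{h}\sim_{(I;M)}R_+$, I would first invoke Theorem \ref{end2}: it gives $end(H^i_{\fa}(M))<\infty$ for all $i\neq t$, where $t=\grade(R_+,M)$, which is exactly the hypothesis of \ref{end3}; hence $H^i_{\fa+R_+}(M)_n\cong H^{i-t}_{R_+}(H^t_{\fa}(M))_n$ for $i\geq t$ and $n\gg 0$. I would then split as in the proof of \ref{end2}. If $\Supp(M/IM)\subseteq V(R_+)$, then $M$ is \rCM~with respect to $R_+$ of degree $t=c$ and the isomorphisms \eqref{iso3} give $H^i_{\fa+R_+}(M)\cong H^i_{\fa}(M)$ for $i\geq c$, which returns us to the analysis of case (i). If $\Supp(M/IM)\nsubseteq V(R_+)$, then $f_{\fa}^{R_+}(M)=t$ by \ref{def}(ii), and I would transfer finiteness and tameness from $\fa$ to $\fa+R_+$ along the \homo~\MV~sequence relating $H^\bullet_{\fa+R_+}(M)$, $H^\bullet_{\fa}(M)\oplus H^\bullet_{R_+}(M)$ and $H^\bullet_I(M)$ (as in \ref{end7}), using that $H^\bullet_I(M)$ is concentrated in degree $t$ and that $end(H^i_{R_+}(M))<\infty$ for all $i$.

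The hard part will be the asymptotics as $n\to-\infty$. Both \ref{end3} and \eqref{iso3} control components only for $n\gg 0$, whereas $f_{\fa+R_+}^{R_+}(M)$ and tameness are statements about the sign-change set near $-\infty$. The decisive, genuinely non-formal step is the claim used above that the top module $H^c_{R_+}(M)$ is not finitely graded when $c=\cd(R_+,M)\geq 1$; by the degreewise reduction this is precisely what forces some $H^i_{\fa+R_+}(M)$ to fail to be finitely graded, hence $f_{\fa+R_+}^{R_+}(M)<\infty$. A secondary difficulty is the \MV~transfer in the non-\rCM~branch of case (ii), where one must check that the unbounded-below behavior of $H^\bullet_{\fa}(M)$ and of $H^c_{R_+}(M)$ combine so that the critical degree for $\fa+R_+$ is still handled and the module $H^c_{\fa+R_+}(M)$ remains tame.
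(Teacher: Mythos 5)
Your case (i) is essentially the paper's argument: the same Grothendieck spectral sequence collapses to $H^{i}_{\fa}(H^{c}_{R_+}(M))\cong H^{i+c}_{\fa+R_+}(M)$, the same degreewise reduction $H^{i}_{\fa}(H^{c}_{R_+}(M))_n\cong H^{i}_{\fa_0}(H^{c}_{R_+}(M)_n)$ via \cite[14.1.12]{BSH12}, and the same dichotomy for tameness (either $H^{c}_{\fa+R_+}(M)$ is finitely graded, or $f_{\fa+R_+}^{R_+}(M)=c$ and Theorem \ref{tame} applies). The input you flag as ``the decisive, genuinely non-formal step'' --- that $\fa_0H^{c}_{R_+}(M)_n\neq H^{c}_{R_+}(M)_n$, in particular $H^{c}_{R_+}(M)_n\neq 0$, for all $n\ll 0$ once $c\geq 1$ --- is exactly \cite[2.6]{rotthaus2005some}, which is what the paper cites; with that reference your pigeonhole version of the finiteness argument goes through.

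Case (ii) has a genuine gap. The detour through \ref{end2} and \ref{end3} is, as you yourself observe, inert: those results control components only for $n\gg 0$, while both conclusions of the theorem live at $n\to-\infty$. More seriously, your master plan of routing tameness through Theorem \ref{tame} cannot work here: in case (ii) one only obtains $f_{\fa+R_+}^{R_+}(M)\leq \cd(R_+,M)$, and when this inequality is strict Theorem \ref{tame} speaks about $H^{f_{\fa+R_+}^{R_+}(M)}_{\fa+R_+}(M)$, not about $H^{\cd(R_+,M)}_{\fa+R_+}(M)$; nor is the latter finitely graded, so the dichotomy from case (i) is unavailable. The step you leave as ``one must check'' is in fact the whole proof. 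The paper does it directly, with no need to split on $\Supp(M/IM)$: since $I$ is generated by a \homo~ $M$-\rs~ of length $t=\grade(R_+,M)$, one has $H^{i}_{I}(M)=0$ for $i\neq t$, so the \MV~ sequence yields $H^{i}_{\fa+R_+}(M)\cong H^{i}_{\fa}(M)\oplus H^{i}_{R_+}(M)$ for $i>t+1$ and a surjection $H^{t+1}_{\fa+R_+}(M)\twoheadrightarrow H^{t+1}_{\fa}(M)\oplus H^{t+1}_{R_+}(M)$; combined with \cite[2.6]{rotthaus2005some} this gives $H^{\cd(R_+,M)}_{\fa+R_+}(M)_n\neq 0$ for all $n\ll 0$, while \cite[1.1]{jz} gives vanishing for all $n\gg 0$. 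Tameness and $f_{\fa+R_+}^{R_+}(M)\leq\cd(R_+,M)<\infty$ then both drop out at once. You need to supply this argument (or an equivalent one) to close case (ii).
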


\begin{proof}
\begin{itemize}

  \item [(i)] Let $t:=\cd(R_+,M)=\grade(R_+,M)$.
  Since $H^i_{R_+}(M)_n$ is a \fg ~ $R_0$-module for all $i$ and all $n$ (\cite[16.1.5]{BSH12}), by \cite[2.6]{rotthaus2005some}, $\fa_0 H^t_{R_+}(M)_n\neq H^t_{R_+}(M)_n$ for all $n\ll 0$, where $\fa_0:=\fa \cap R_0$. Therefore, using \cite[6.2.7]{BSH12},
  \begin{equation}\label{grade}
   \text{for all $n\ll 0$ there exists $k_n \in \N_0$ \st} ~H^{k_n}_{\fa_0}(H^t_{R_+}(M)_n)\neq 0 \tag{3.2}.
  \end{equation}

   Now, considering the following Grothendieck's spectral sequence (\cite[11.38]{rotman-spectral})
  \[
  E_2^{i,j}=H^i_{\fa}(H^j_{R_+}(M)) \stackrel{i} \Rightarrow H^{i+j}_{\fa +R_+}(M),
  \]
  we have $E_2^{i,j}=0$ for all $i$ and all $j\neq t$. This implies that
  \begin{equation}\label{iso6}
    H^i_{\fa}(H^t_{R_+}(M)) \cong H^{i+t}_{\fa +R_+}(M) \qquad \text{for all}~ i \in \N_0. \tag{3.3}
  \end{equation}

 On the other hand, by  \cite[14.1.12]{BSH12} and the fact that $H^t_{R_+}(M)$
   is $R_+$-torsion, we have
   \begin{equation}\label{iso5}
   H^i_{\fa}(H^t_{R_+}(M))_n \cong H^i_{\fa_0+R_+}(H^t_{R_+}(M))_n \cong H^i_{\fa_0R}(H^t_{R_+}(M))_n \cong
   H^i_{\fa_0}(H^t_{R_+}(M)_n) \tag{3.4}
   \end{equation}
    for all $i$ and all $n $. So, if $f_{\fa +R_+}^{R_+}(M) =\infty$, then by \eqref{iso5}, \eqref{iso6} and \ref{def},
    \[
    H^i_{\fa_0}(H^t_{R_+}(M)_n)\cong H^{i+t}_{\fa +R_+}(M)_n=0
   \]
    for all $i$ and all $n\ll 0$ that is a contradiction with \eqref{grade}. Therefore, $f_{\fa +R_+}^{R_+}(M) < \infty$.
    In addition, $t=\grade(R_+,M)\leq \grade(\fa+R_+,M)\leq f_{\fa +R_+}^{R_+}(M)$. So, by \ref{def} and \ref{tame}, $H^t_{\fa+R_+}(M)$ is tame.
  \item [(ii)]
  Set $t:=\grade(R_+,M)$. By the \homo ~\MV ~ \seq~ and \cite[2.2]{cohomological3}, we have the  \homo ~exact \seq
  \begin{equation*}
    \ldots \longrightarrow H_I^{i-1}(M) \longrightarrow H_{\fa + R_+} ^i(M) \longrightarrow H_{\fa}^i(M) \oplus H_{R_+}^i(M) \longrightarrow H_I^i(M) \longrightarrow H_{\fa +R_+} ^{i+1}(M) \longrightarrow \ldots ~.
  \end{equation*}
  It yields
  \begin{equation}\label{iso2}
    H_{\fa + R_+} ^i(M)\cong H_{\fa}^i(M) \oplus H_{R_+}^i(M) \qquad \text{for all $i \gneq t+1$} \tag{3.5}
  \end{equation}
  and, by \cite[6.2.7]{BSH12}, the exact \seq
  \begin{equation}\label{exact}
    0 \longrightarrow H_{\fa + R_+} ^t(M) \longrightarrow H_{\fa}^t(M) \oplus H_{R_+}^t(M) \longrightarrow H_I^t(M) \longrightarrow H_{\fa +R_+} ^{t+1}(M) \longrightarrow
    H_{\fa}^{t+1}(M) \oplus H_{R_+}^{t+1}(M) \longrightarrow 0 \tag{3.6}.
  \end{equation}
  Therefore, in view of \eqref{iso2}, \eqref{exact} and  \cite[2.6]{rotthaus2005some},
  $H^{\cd(R_+,M)}_{\fa +R_+}(M)_n\neq 0$ for all $n\ll 0$. So, by \cite[1.1]{jz}, $H^{\cd(R_+,M)}_{\fa+R_+}(M)$ is tame and, using \ref{def}, $f_{\fa +R_+}^{R_+}(M) \leq \cd(R_+,M) <\infty$.

  \end{itemize}
\end{proof}

\begin{rem}\label{faR3}
\noindent
\begin{itemize}
  \item [(i)] Here is another situation for the finiteness of $\faR (M)$.
  Assume that $\fa$ and $R_+$ are geometrically h-linked over $M$. Then, by \cite[2.9(iii)]{linkage1}, $\Supp (M/\fa M) \nsubseteq V(R_+)$ and there exists a \homo ~ prime ideal $\p \in \Supp M \cap V(\fa) \backslash V(R_+)$.
   Hence, $\p +\fa +R_+ = \p+ R_+ =\p \cap R_0 +R_+ \neq R$. Therefore, by \cite[9.3.7]{BSH12},
  \[
  f^{R_+}_{\fa +R_+}(M) \leq \depth M_{\p} + ht (\fa +R_+ +\p)/\p <\infty .
  \]

  \item [(ii)] Assume that $\fb \supseteq \fa$. Then $\fb$ can be represented as $\fb=\fa +(b_1, \ldots , b_s)$ for some \homo ~ elements $\fb_1, \ldots, \fb_s \in R$. Using \cite[14.1.11]{BSH12} and induction on $s$, one can see that $\faR (M) \leq f_{\fb}^{R_+}(M)$.
\end{itemize}
\end{rem}

The following proposition presents possibilities for $\faR(M)$ and $\fR(M)$ in the case where $\fa$ is h-linked with $R_+$.

\begin{prop}\label{faR}
  Let \aR , then $\faR(M)$, $\fR(M) \in \{\grade(R_+,M), f_{\fa +R_+}^{R_+}(M)\}$.
\end{prop}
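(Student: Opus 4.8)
The plan is to set $t:=\grade(R_+,M)$ and $g:=f_{\fa+R_+}^{R_+}(M)$, and to prove that both invariants are squeezed into the two-element set $\{t,g\}$ by running the homogeneous Mayer--Vietoris sequence against the class of $R_+$-power-torsion modules. First I would record the structural input of h-linkage: since \aR, the ideal $I$ is generated by a homogeneous $M$-regular sequence and, by \cite[2.6]{linkage1}, $\grade(\fa,M)=\grade(R_+,M)=t$, so that sequence has length $t$. Hence $H^i_I(M)=0$ for $i<t$ by grade, while $\cd(I,M)\le t$ because $I$ is generated by $t$ elements; therefore $H^i_I(M)=0$ for \emph{all} $i\neq t$. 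This concentration of $H_I^\bullet(M)$ in the single degree $t$ is exactly what drives the argument.

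The conceptual device is the class $\mathcal S$ of graded $R$-modules $N$ with $R_+\subseteq\sqrt{0:N}$, equivalently $R_+^kN=0$ for some $k$. This $\mathcal S$ is closed under submodules, quotients, finite direct sums and extensions, and by \ref{def} one has $f_{\fc}^{R_+}(M)=\inf\{i:H^i_{\fc}(M)\notin\mathcal S\}$ for every homogeneous ideal $\fc$. Two elementary bounds then apply to both invariants simultaneously: since $H^i_{\fa}(M)=H^i_{R_+}(M)=0$ for $i<t$, each of $\faR(M),\fR(M)$ is at least $t$; and since $\fa,R_+\subseteq\fa+R_+$, \ref{faR3}(ii) gives $\faR(M),\fR(M)\le g$. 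Thus both invariants lie in $[t,g]$, and it remains only to exclude the open interval $(t,g)$.

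The heart of the proof is to show that $H^i_{\fa}(M),H^i_{R_+}(M)\in\mathcal S$ for every $t<i<g$, using the homogeneous Mayer--Vietoris sequence
\[
\cdots\longrightarrow H^{i-1}_I(M)\longrightarrow H^i_{\fa+R_+}(M)\longrightarrow H^i_{\fa}(M)\oplus H^i_{R_+}(M)\longrightarrow H^i_I(M)\longrightarrow\cdots
\]
furnished by \ref{faR2} together with \cite[2.2]{cohomological3}. For $i>t+1$ both $H^{i-1}_I(M)$ and $H^i_I(M)$ vanish, so $H^i_{\fa+R_+}(M)\cong H^i_{\fa}(M)\oplus H^i_{R_+}(M)$; as $i<g$ forces $H^i_{\fa+R_+}(M)\in\mathcal S$, each summand lies in $\mathcal S$. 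The single delicate index is $i=t+1$: there $H^{t+1}_I(M)=0$ but $H^t_I(M)$ need not lie in $\mathcal S$, so the sequence yields only a surjection $H^{t+1}_{\fa+R_+}(M)\to H^{t+1}_{\fa}(M)\oplus H^{t+1}_{R_+}(M)$ rather than an isomorphism. This is the step I expect to demand the most care, but it is harmless: a quotient of an $\mathcal S$-module is again in $\mathcal S$, so when $t+1<g$ we still conclude $H^{t+1}_{\fa}(M),H^{t+1}_{R_+}(M)\in\mathcal S$. Combining the two ranges gives $H^i_{\fa}(M),H^i_{R_+}(M)\in\mathcal S$ for all $t<i<g$.

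It follows that neither $\faR(M)$ nor $\fR(M)$ can equal an index strictly between $t$ and $g$, and with the bounds $t\le\faR(M),\fR(M)\le g$ from the second paragraph this pins both invariants to $\{t,g\}=\{\grade(R_+,M),f_{\fa+R_+}^{R_+}(M)\}$, as claimed. Finally I would observe that the degenerate values $g=t$, $g=t+1$ and $g=\infty$ require no separate discussion: in each of them the interval $(t,g)$ is empty or the range $[t,g]$ has already collapsed onto $\{t,g\}$, so the conclusion is immediate.
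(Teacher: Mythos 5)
Your proof is correct and takes essentially the same route as the paper's: the same two bounds $t\le \faR(M),\fR(M)\le f_{\fa+R_+}^{R_+}(M)$ obtained from \cite[2.6(i)]{linkage1}, \cite[6.2.7]{BSH12} and Remark \ref{faR3}(ii), followed by the same Mayer--Vietoris analysis (vanishing of $H^i_I(M)$ for $i\neq t$ giving an isomorphism $H^i_{\fa+R_+}(M)\cong H^i_{\fa}(M)\oplus H^i_{R_+}(M)$ for $i>t+1$ and a surjection at $i=t+1$) to exclude the open interval. Working with the class of $R_+$-power-torsion modules instead of finitely graded modules is only a cosmetic difference, since the two notions define the same invariant here.
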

\begin{proof}
  Set $t:=\grade(R_+,M)$. By \cite[2.6(i)]{linkage1} and \cite[6.2.7]{BSH12}, $t \leq \faR(M) , \fR(M)$. Also, by \ref{faR3}(ii),
   $\faR(M) , \fR(M) \leq f_{\fa +R_+}^{R_+}(M)$.\\
  If $f_{\fa +R_+}^{R_+}(M) \leq t+1$, the result follows. So, let $f_{\fa +R_+}^{R_+}(M) \gneq t+1$. By \eqref{iso2} and \eqref{exact}, for all $i\geq t+1$, $H^i_{\fa}(M)$ and $H^i_{R_+}(M)$ are \fG ~\ifff ~ $H^i_{\fa+R_+}(M)$ is \fG ~ and this proves the claim.
  \end{proof}


\end{document}